\theoremstyle{plain}
\newtheorem{lem}{Lemma}[section]
\newtheorem{cor}[lem]{Corollary}
\newtheorem{prop}[lem]{Proposition}
\newtheorem{thm}[lem]{Theorem}
\theoremstyle{definition}
\newtheorem{ex}[lem]{Example}
\newtheorem{question}[lem]{Question}
\newtheorem{construction}[lem]{Construction}
\newtheorem{fact}[lem]{Fact}
\newtheorem{step}[lem]{Step}
\newcommand{\HH}{\operatorname{H}}
\newcommand{\shift}{\mathsf{\Sigma}}
\newcommand{\cone}{\operatorname{Cone}}
\newcommand{\ol}{\overline}
\newcommand{\wti}{\widetilde}
\newcommand{\bbz}{\mathbb{Z}}
\newcommand{\bbn}{\mathbb{N}}
\newcommand{\xra}{\xrightarrow}
\renewcommand{\geq}{\geqslant}
\renewcommand{\leq}{\leqslant}
\numberwithin{equation}{lem}
\newcommand{\edgeIdeal}[1][G]{\mathcal{E}(#1)}
\newcommand{\newIdeal}[1]{\langle #1 \rangle}
\newcommand{\bbk}{\mathbbm{k}}
\begin{document}

\bibliographystyle{amsplain}

\author{Bethany Kubik}

\author{Denise Rangel Tracy}
\address{Denise Rangel Tracy, Francis Marion University, Department of Mathematics, 4822 E. Palmetto St., Florence, S.C. 29506, PO Box 100547, Florence, S.C. 29502}
\email{rangel.tracy@fmarion.edu}
\urladdr{https://www.fmarion.edu/directory/tracy-dr-denise-a-r/}

\author{Keri Ann Sather-Wagstaff}
\address{Keri Ann Sather-Wagstaff, School of Mathematical and Statistical Sciences,
Clemson University,
O-110 Martin Hall, Box 340975, Clemson, S.C. 29634
USA}
\email{ssather@clemson.edu}
\urladdr{https://ssather.people.clemson.edu/}

\thanks{
Sather-Wagstaff was supported by the National Science Foundation under Grant No.\ EES-2243134.
Any opinions, findings, and conclusions or recommendations expressed in this material are those of the author and do not necessarily reflect the views of the National Science Foundation.}

\title[Resolutions of Edge Ideals of Weighted Complete Bipartite Graphs]{Minimal Free Resolutions of Edge Ideals of Edge-Weighted Complete Bipartite Graphs}

\date{\today}


\keywords{Complete bipartite graphs; minimal free resolutions; weighted edge ideals}
\subjclass[2010]{Primary: 13D02; Secondary: 05C22, 05E40, 13F55}

\begin{abstract}
We explicitly describe cellular minimal free resolutions of certain classes of edge ideals of weighted complete bipartite graphs based on a construction of Visscher. Specifically, we show that Visscher's construction minimally resolves all edge ideals of undirected vertex-weighted complete bipartite graphs, and we characterize the edge-weighted complete bipartite graphs whose edge ideals are minimally resolved by Visscher's construction.
\end{abstract}

\maketitle

\tableofcontents

\section{Introduction} \label{sec230728a}

Minimal free resolutions are important constructions in commutative algebra, encoding valuable information
for algebraic considerations~\cite{avramov:ifr,MR0460383} and for applications to coding theory~\cite{math10122079}, geometry~\cite{MR2103875}, and other fields.
These resolutions are hard to compute in general, even for special classes of ideals.
For instance, this was only recently accomplished by 
Eagon, Miller, and Ordog~\cite{eagon2020minimal} for
arbitrary monomial ideals where one has numerous combinatorial tools at one's disposal.
Note that Hochster~\cite{hochster:cmrcsc} had already described the free modules in this case;
the hard work was to explicitly describe the differentials.

We are interested in resolutions with additional combinatorial structure for ideals that arise from complete bipartite graphs. 
For the additional structure, we look to Bayer and Sturmfels'~\cite{MR1647559} notion of cellular resolutions of monomial ideals, i.e., ideals in the polynomial ring
$S=\bbk[X_1,\ldots,X_d]$ that are generated by monomials in the variables $X_i$, where $\bbk$ is a field.
To construct 
these resolutions, one considers a finite regular cell complex $C$ (i.e., a regular CW-complex with a finite number of cells), labeling the vertices with the monomial generators of the ideal $I$,
and one uses $C$ to describe a bounded chain complex 
\begin{equation}F_C=\qquad \cdots
\xra{\partial^{F_C}_{4}} \underbrace{S^{C_2}}_{=F_{C,3}}
\xra{\partial^{F_C}_{3}} \underbrace{S^{C_1}}_{=F_{C,2}}
\xra{\partial^{F_C}_{2}} \underbrace{S^{C_0}}_{=F_{C,1}}
\xra{\partial^{F_C}_{1}} \underbrace{S}_{=F_{C,0}}\to 0
\label{eq230909a}
\end{equation}
with $H_0(F_C)\cong S/I$. Here
each $F_{C,i}=S^{C_{i-1}}$ is a finite-rank free $S$ module with $C_i$ the set of $i$-dimensional faces of $C$, e.g., with $C_{-1}=\{\emptyset\}$ so $F_{C,0}\cong S$.
The differential $\partial^{F_C}_i$ is informed by sign conventions from algebraic topology and the generators of $I$.
See Section~\ref{sec230728b} for more background discussion, including an explicit description of $\partial^{F_C}$ in Construction~\ref{const231022a}. 
Here are two examples of this that we revisit throughout the paper.

\begin{ex}\label{ex230728a}
Consider the square-free monomial ideal 
$$I=\newIdeal{X_1Y_1,X_1Y_2,X_2Y_1,X_2Y_2}\subseteq S=\bbk[X_1,X_2,Y_1,Y_2]$$ 
and the cell complex 
\begin{equation}
\begin{tikzpicture}[baseline=(current  bounding  box.east)]
     \node (L) at (-1, 1) {$C = $};
    \fill[gray!30, rounded corners=10pt] (0,0) rectangle (3,2);    
    \node (A) at (0, 2) {$X_1Y_1$};
    \node (B) at (3, 2) {$X_1Y_2$};
    \node (C) at (0, 0) {$X_2Y_1$};
    \node (D) at (3, 0) {$X_2Y_2$};

    \draw (A) -- (B);
    \draw (A) -- (C);
    \draw (C) -- (D);
    \draw (B) -- (D);

\end{tikzpicture} \hfill\label{eq230806a}
\end{equation}
which is a shaded square with vertices labeled with the generators of $I$. 
Note that $C$ has four cells in dimension 0 (vertices), four cells in dimension 1 (edges), and one cell in dimension 2 (shaded square), in addition to the degenerate empty cell in dimension -1. 
With~\eqref{eq230909a}, this explains the ranks of the free modules in $F_C$:
$$F_C=\quad
0\to
S^1\xra{\!\!\left(\begin{smallmatrix}-X_2\\X_1\\-Y_2\\Y_1\end{smallmatrix}\right)\!\!}
S^4\xra{\!\!\left(\begin{smallmatrix}Y_2&0&-X_2&0\\ -Y_1&0&0&-X_2\\ 0&Y_2&X_1&0\\ 0&-Y_1&0&X_1 \end{smallmatrix}\right)\!\!}
S^4\xra{\left(\begin{smallmatrix}X_1Y_1&X_1Y_2&X_2Y_1&X_2Y_2\end{smallmatrix}\right)}
S\to 0.
$$
For some explanation of the entries in the differential, we add more labels to $C$. 
\begin{center}
\begin{tikzpicture}[baseline=(current  bounding  box.east)]
     \node (L) at (-2, 1) {$C =$};
    \fill[gray!30, rounded corners=10pt] (0,0) rectangle (3,2);    
    \node (A) at (0, 2) {$X_1Y_1$};
    \node (B) at (3, 2) {$X_1Y_2$};
    \node (C) at (0, 0) {$X_2Y_1$};
    \node (D) at (3, 0) {$X_2Y_2$};
    \node (E) at (1.5,1) {\tiny{$X_1X_2Y_1Y_2$}};
    
    \draw (A) -- node[midway, above] {\tiny{$X_1Y_1Y_2$}} (B);
    \draw (A) -- node[midway, left] {\tiny{$X_1X_2Y_1$}} (C);
    \draw (C) -- node[midway, below] {\tiny{$X_2Y_1Y_2$}} (D);
    \draw (B) -- node[midway, right] {\tiny{$X_1X_2Y_2$}} (D);

\end{tikzpicture} 
\end{center}

In addition to the labels on the vertices, each edge is labeled with the LCM of the incident vertices, as is the 2-cell.
Applying the differential to the basis vector one associated to the top horizontal edge, yields a linear combination of the basis vectors associated to the incident vertices.
The corresponding coefficients are the quotients of the associated labels ($X_1Y_1Y_2/X_1Y_1=Y_2$ and $X_1Y_1Y_2/X_1Y_2=Y_1$) with appropriate signs.
One sees these coefficients in the first column of the $4\times 4$ differential $\partial^{F_C}_2$.
See Constructions~\ref{con230806a} and~\ref{const231022a} plus Example~\ref{ex230806a} below for more justification of the differential.
One checks readily that $F_C$ is a minimal cellular resolution of $S/I$; this also follows from a result of Visscher, see Fact~\ref{fact230806a}.

In this paper, we are interested in generalizations of this situation.
Note that the generators of $I$ correspond to the edges of the complete bipartite graph $K_{2,2}$, 
and they are arranged on $C$ so that generators $X_iY_j$ and $X_pY_q$ are adjacent in $C$ if and only if $i=p$ or $j=q$, i.e., if and only if the edges 
$X_iY_j$ and $X_pY_q$ are incident in $K_{2,2}$. 
Our work here involves ideals of the following generalized form
$$J=\newIdeal{X_1^{a_{1,1}}Y_1^{a_{1,1}},X_1^{a_{1,2}}Y_2^{a_{1,2}},X_2^{a_{2,1}}Y_1^{a_{2,1}},X_2^{a_{2,2}}Y_2^{a_{2,2}}}$$ 
with the cell complex $C$ labeled accordingly:
\begin{equation}
\begin{tikzpicture}[baseline=(current  bounding  box.east)]
     \node (L) at (-2, 1) {$C =$};
    \fill[gray!30, rounded corners=10pt] (0,0) rectangle (3,2);    
    \node (A) at (0, 2) {$X_1^{a_{1,1}}Y_1^{a_{1,1}}$};
    \node (B) at (3, 2) {$X_1^{a_{1,2}}Y_2^{a_{1,2}}$};
    \node (C) at (0, 0) {$X_2^{a_{2,1}}Y_1^{a_{2,1}}$};
    \node (D) at (3, 0) {$X_2^{a_{2,2}}Y_2^{a_{2,2}}$};

    \draw (A) -- (B);
    \draw (A) -- (C);
    \draw (C) -- (D);
    \draw (B) -- (D);
\end{tikzpicture} \hfill \label{eq230806b}
\end{equation}
In this case, our main result, Theorem~\ref{thm230730a}, shows the following: 
\begin{enumerate}[\rm(*)]
\item 
\emph{the corresponding chain complex $F_C$ is a resolution of $S/J$ if and only if, up to symmetry, $J$ has the form
$$J=\newIdeal{X_1^{\alpha}Y_1^{\alpha},X_1^{\alpha}Y_2^{\alpha},X_2^{\beta}Y_1^{\beta},X_2^{\gamma}Y_2^{\gamma}}$$ 
with $\alpha\leq\beta\leq \gamma$.}
\end{enumerate}
Then the full labeling of $C$ is
\begin{equation}
\begin{tikzpicture}[baseline=(current  bounding  box.east)]
     \node (L) at (-2, 1) {$C =$};
    \fill[gray!30, rounded corners=10pt] (0,0) rectangle (3,2);    
     \node (A) at (0, 2) {$X_1^{\alpha}Y_1^{\alpha}$};
    \node (B) at (3, 2) {$X_1^{\alpha}Y_2^{\alpha}$};
    \node (C) at (0, 0) {$X_2^{\beta}Y_1^{\beta}$};
    \node (D) at (3, 0) {$X_2^{\gamma}Y_2^{\gamma}$};
   
    \node (E) at (1.5,1) {\tiny{$X_1^{\alpha}X_2^{\gamma}Y_1^{\beta}Y_2^\gamma$}};
    
    \draw (A) -- node[midway, above] {\tiny{$X_1^{\alpha}Y_1^{\alpha}Y_2^{\alpha}$}} (B);
    \draw (A) -- node[midway, left] {\tiny{$X_1^{\alpha}X_2^{\beta}Y_1^{\beta}$}} (C);
   \draw (C) -- node[midway, below] {\tiny{$X_2^{\gamma}Y_1^{\beta}Y_2^{\gamma}$}} (D);
    \draw (B) -- node[midway, right] {\tiny{$X_1^{\alpha}X_2^{\gamma}Y_2^{\gamma}$}} (D);

\end{tikzpicture} \hfill\label{eq230909b}
\end{equation}
and the resolution is
\begin{equation}
\label{eq231022d}
F_C=\qquad
0\to
S^1\xra{\partial^{F_C}_3}
S^4\xra{\partial^{F_C}_2}
S^4\xra{\left(\begin{smallmatrix}X_1^{\alpha}Y_1^{\alpha}&X_1^{\alpha}Y_2^{\alpha}&X_2^{\beta}Y_1^{\beta}&X_2^{\gamma}Y_2^{\gamma}\end{smallmatrix}\right)}
S\to 0.
\end{equation}
with the following differentials in degrees 3 and 2.
\begin{align*}
\partial^{F_C}_3
&=\left(\begin{smallmatrix}-X_2^\gamma Y_1^{\beta-\gamma}Y_2^{\gamma-\alpha}\\X_1^{\alpha}\\-X_2^{\gamma-\beta}Y_2^\gamma \\Y_1^\beta\end{smallmatrix}\right)
&\partial^{F_C}_2
&=\left(\begin{smallmatrix}Y_2^\alpha&0&-X_2^\beta Y_1^{\beta-\alpha}&0\\ -Y_1^\alpha&0&0&-X_2^\gamma Y_2^{\gamma-\alpha}\\ 0&X_2^{\gamma-\beta}Y_2^\gamma&X_1^\alpha&0\\ 
0&-Y_1^\beta&0&X_1^\alpha \end{smallmatrix}\right)
\end{align*} 
See examples below for more about these ideals.
\end{ex}

\begin{ex}\label{ex230728b}
Consider the next monomial ideals in $S=\bbk[X_1,X_2,Y_1,Y_2,Y_3]$
\begin{align*}
I&=\newIdeal{X_1Y_1,X_1Y_2,X_1Y_3,X_2Y_1,X_2Y_2,X_2Y_3}\\
J&=\newIdeal{X_1^\alpha Y_1^\alpha ,X_1^\alpha Y_2^\alpha ,X_1^\alpha Y_3^\alpha ,X_2^\beta Y_1^\beta,X_2^\gamma Y_2^\gamma,X_2^\delta Y_3^\delta}
\end{align*}
with $\alpha\leq\beta\leq\gamma\leq\delta$.
Starting with $I$, we work with the following  cell complex 
\begin{equation}
\begin{tikzpicture}
     \node (L) at (-1.5, 1) {$C = $};
     
       \fill[gray!30, rounded corners=5pt] (0,0) rectangle (3,2);
     \fill[gray!50, rounded corners=5pt] (0,2) -- (1.5,1) -- (3,2) -- cycle;      
     \fill[gray!50, rounded corners=5pt] (0,0) -- (1.5,-1) -- (3,0) -- cycle;

    \node (A) at (0, 2) {$X_1Y_1$};
    \node (B) at (3, 2) {$X_1Y_2$};
    \node (C) at (0, 0) {$X_2Y_1$};
    \node (D) at (3, 0) {$X_2Y_2$};
    \node (E) at (1.5, 1) {$X_1Y_3$};
    \node (F) at (1.5, -1) {$X_2Y_3$};

    \draw (A) -- (B);
    \draw (A) -- (C);
    \draw (B) -- (D);
    \draw[dotted] (C) -- (D);
    
     \draw (A) -- (E);
    \draw (B) -- (E);
    \draw (C) -- (F);    
     \draw (E) -- (F);
    \draw (D) -- (F);
\end{tikzpicture}\hfill\label{eq230806c}
\end{equation}
which is a solid cylindrical prism with triangular base with vertices labeled with the generators of $I$.
The complex $C$ has six cells in dimension 0 (vertices), nine cells in dimension 1 (edges), five cells in dimension 2 (triangles and squares), and one cell in dimension 3 (solid prism), in addition to the degenerate empty cell in dimension -1. 
This explains the ranks of the free modules in $F_C$:
$$F_C=\quad
0\to
S^1\xra{\partial^{F_C}_4}
S^5\xra{\partial^{F_C}_3}
S^9\xra{\partial^{F_C}_2}
S^6\xra{\partial^{F_C}_1}
S\to 0.
$$
The differentials are given by the following matrices:
\begin{align*}
\partial^{F_C}_4
&=\begin{pmatrix}
-X_2\\ X_1\\ Y_3\\ -Y_2\\ Y_1\end{pmatrix}
\\
\partial^{F_C}_3
&=\begin{pmatrix}
-Y_3&0&-X_2&0&0\\
Y_2&0&0&-X_2&0\\
-Y_1&0&0&0&-X_2\\
0&-Y_3&X_1&0&0\\
0&Y_2&0&X_1&0\\
0&-Y_1&0&0&X_1\\
0&0&-Y_2&-Y_3&0\\
0&0&Y_1&0&-Y_3\\
0&0&0&Y_1&Y_2
\end{pmatrix}
\\
\partial^{F_C}_2
&=\begin{pmatrix}
Y_2&Y_3&0&0&0&0&-X_2&0&0 \\
-Y_1&0&Y_3&0&0&0&0&-X_2&0\\
0&-Y_1&-Y_2&0&0&0&0&0&-X_2\\
0&0&0&Y_2&Y_3&0&X_1&0&0&\\
0&0&0&-Y_1&0&Y_3&0&X_1&0\\
0&0&0&0&-Y_1&-Y_2&0&0&X_1
\end{pmatrix}
\\
\partial^{F_C}_1
&=\begin{pmatrix}X_1Y_1&X_1Y_2&X_1Y_3&X_2Y_1&X_2Y_2&X_2Y_3\end{pmatrix}
\end{align*}
As in Example~\ref{ex230728a}, one checks readily or applies Fact~\ref{fact230806a} to see that $F_C$ is a minimal cellular resolution of $S/I$.

Turning to the ideal 
$$J=\newIdeal{X_1^\alpha Y_1^\alpha ,X_1^\alpha Y_2^\alpha ,X_1^\alpha Y_3^\alpha ,X_2^\beta Y_1^\beta,X_2^\gamma Y_2^\gamma,X_2^\delta Y_3^\delta}$$
with $\alpha\leq\beta\leq\gamma\leq\delta$,
these constructions yield the following.
\begin{equation}
\begin{tikzpicture}
     \node (L) at (-1.5, 1) {$C = $};
     
       \fill[gray!30, rounded corners=5pt] (0,0) rectangle (3,2);
     \fill[gray!50, rounded corners=5pt] (0,2) -- (1.5,1) -- (3,2) -- cycle;      
     \fill[gray!50, rounded corners=5pt] (0,0) -- (1.5,-1) -- (3,0) -- cycle;

    \node (A) at (0, 2) {$X_1^\alpha Y_1^\alpha$};
    \node (B) at (3, 2) {$X_1^\alpha Y_2^\alpha$};
    \node (C) at (0, 0) {$X_2^\beta Y_1^\beta$};
    \node (D) at (3, 0) {$X_2^\gamma Y_2^\gamma$};
    \node (E) at (1.5, 1) {$X_1^\alpha Y_3^\alpha$};
    \node (F) at (1.5, -1) {$X_2^\delta Y_3^\delta $};

    \draw (A) -- (B);
    \draw (A) -- (C);
    \draw (B) -- (D);
    \draw[dotted] (C) -- (D);
    
     \draw (A) -- (E);
    \draw (B) -- (E);
    \draw (C) -- (F);    
     \draw (E) -- (F);
    \draw (D) -- (F);
\end{tikzpicture}\hfill\label{eq230806d}
\end{equation}
$$F_C=\quad
0\to
S^1\xra{\partial^{F_C}_4}
S^5\xra{\partial^{F_C}_3}
S^9\xra{\partial^{F_C}_2}
S^6\xra{\partial^{F_C}_1}
S\to 0.
$$
\begin{align*}
\partial^{F_C}_4
&=\begin{pmatrix}
-X_2^\delta Y_1^{\beta-\alpha}Y_2^{\gamma-\alpha}Y_3^{\delta-\alpha}\\ X_1^\alpha\\ X_2^{\delta-\gamma}Y_3^\delta\\ -Y_2^\gamma\\ Y_1^\beta\end{pmatrix}
\\
\partial^{F_C}_3
&=\begin{pmatrix}
-Y_3^\alpha&0&-X_2^\gamma Y_1^{\beta-\alpha}Y_2^{\gamma-\alpha}&0&0\\
Y_2^\alpha&0&0&-X_2^\delta Y_1^{\beta-\alpha}Y_3^{\delta-\alpha}&0\\
-Y_1^\alpha&0&0&0&\!\!\!-X_2^\delta Y_2^{\gamma-\alpha}Y_3^{\delta-\alpha}\\
0&-X_2^{\delta-\gamma}Y_3^\delta&X_1^\alpha&0&0\\
0&Y_2^\gamma&0&X_1^\alpha&0\\
0&-Y_1^\beta&0&0&X_1^\alpha\\
0&0&-X_2^{\gamma-\beta}Y_2^\gamma&-X_2^{\delta-\beta}Y_3^\delta&0\\
0&0&Y_1^\beta&0&-X_2^{\delta-\gamma}Y_3^\delta\\
0&0&0&Y_1^\beta&Y_2^\gamma
\end{pmatrix}
\\
\partial^{F_C}_2
&=\left(\begin{smallmatrix}
Y_2^\alpha&Y_3^\alpha&0&0&0&0&-X_2^\beta Y_1^{\beta-\alpha}&0&0 \\
-Y_1^\alpha&0&Y_3^\alpha&0&0&0&0&-X_2^\gamma Y_2^{\gamma-\alpha}&0\\
0&-Y_1^\alpha&-Y_2^\alpha&0&0&0&0&0&-X_2^\delta Y_3^{\delta-\alpha}\\
0&0&0&X_2^{\gamma-\beta}Y_2^\gamma&X_2^{\delta-\beta}Y_3^\delta&0&X_1^\alpha&0&0&\\
0&0&0&-Y_1^\beta&0&X_2^{\delta-\gamma}Y_3^\delta&0&X_1^\alpha&0\\
0&0&0&0&-Y_1^\beta&-Y_2^\gamma&0&0&X_1^\alpha
\end{smallmatrix}\right)
\\
\partial^{F_C}_1
&=\begin{pmatrix}X_1^\alpha Y_1^\alpha &X_1^\alpha Y_2^\alpha &X_1^\alpha Y_3^\alpha &X_2^\beta Y_1^\beta&X_2^\gamma Y_2^\gamma&X_2^\delta Y_3^\delta\end{pmatrix}
\end{align*}
Our main result Theorem~\ref{thm230730a} shows that $F_C$ is a minimal cellular resolution of $S/J$,
and moreover characterizes the ideals of the form
$$\newIdeal{X_1^{a_{1,1}}Y_1^{a_{1,1}},X_1^{a_{1,2}}Y_2^{a_{1,2}},X_1^{a_{1,3}}Y_3^{a_{1,3}},X_2^{a_{2,1}}Y_1^{a_{2,1}},X_2^{a_{2,2}}Y_2^{a_{2,2}},X_2^{a_{2,3}}Y_3^{a_{2,3}}}$$ 
for which $F_C$ is a minimal cellular resolution.

See examples below for more about these ideals.
\end{ex}

Section~\ref{sec230728b} contains background information about Bayer and Sturmfels' construction, including their criterion for $F_X$ to be acyclic, i.e., 
to be a resolution of $S/I$.

The ideals we consider in this paper start with Villarreal's~\cite{villarreal:cmg} \emph{edge ideal} of a finite simple graph $G$ with vertex set $V=\{X_1,\ldots,X_d\}$.
This is none other than the ideal of $S$ generated by the edges of $G$:
$$\edgeIdeal=\newIdeal{X_iX_j\mid\text{$X_iX_j$ is an edge of $G$}}.$$
As Villarreal and others show, $\edgeIdeal$ 
captures valuable information about $G$; and, conversely, the structure of $G$ gives graph-theoretic explanations of useful algebraic information about $\edgeIdeal$. 
For instance, one can use the ``vertex covers'' of $G$ to find the minimal prime decomposition of $\edgeIdeal$ graph-theoretically~\cite{villarreal:cmg} 
and hence the Krull dimension of $S/\edgeIdeal$.
One can understand the Cohen-Macaulay property and the minimal free resolution for $S/\edgeIdeal$ in some cases 
(see, e.g., \cite{ha:rsfmifi,ha:mieihgbn,MR2231097,vantuyl:cmcg,villarreal:cmg}), 
though these depend not only on $G$ but also on the characteristic of $\bbk$, the ground field~\cite{MR2209703}. 
In particular, one cannot expect to find a regular cell complex supporting the minimal free resolution of $S/\edgeIdeal$ that only depends on $G$ in general,
so we consider special classes of graphs to see when such cell complexes exist. 

Visscher~\cite{MR2262383}
explicitly constructed a cellular minimal free resolution of the square-free edge ideal of an arbitrary complete bipartite graph. 
This cellular resolution is the focus of this paper, so we describe it in some detail here. 

\begin{construction}\label{con230806a}
Fix integers $m,n\geq 1$, and consider the complete bipartite graph $K_{m,n}$ with partite sets $X_1,\ldots,X_m,Y_1,\ldots,Y_n$.
The edge ideal of $K_{m,n}$ is 
$$\edgeIdeal[K_{m,n}]=\newIdeal{X_iY_j\mid i=1,\ldots,m; j=1,\ldots,n}\subseteq S=\bbk[X_1,\ldots,X_m,Y_1,\ldots,Y_n].$$
The nonempty faces of the cell complex $V_{m,n}$ are of the form $(A,B)$ where $\emptyset\neq A\subseteq [m]=\{1,\ldots,m\}$ and $\emptyset\neq B\subseteq [n]$.
The dimension of the face $(A,B)$ is $|A|+|B|-1$.
For instance, the vertices of $V_{m,n}$ are the faces $(\{i\},\{j\})$, which we write as $(i,j)$, for $i\in[m]$ and $j\in[n]$. 
The edges of $V_{m,n}$ are the faces of the form $(\{i,j\},\{p\})=(ij,p)$ and $(\{i\},\{p,q\})=(i,pq)$ where $i\neq j$ and $p\neq q$. 
And so on. 
One labels the vertex $(i,j)\in V_{m,n,0}$ with the generator $X_iY_j\in\edgeIdeal[K_{m,n}]$. 

For the sake of clarity, we explicitly describe the chain complex $F_{m,n}$ including its differential.
The basis in degree 0 is $1\in S=F_{m,n,0}$.
For $d\geq 1$, the basis vectors of $F_{m,n,d}$ correspond to the $(d-1)$-dimensional faces of $V_{m,n}$, i.e., ordered pairs $(A,B)\in V_{m,n}$ with
$\emptyset\neq A\subseteq [m]$ and $\emptyset\neq B\subseteq [n]$ such that $|A|+|B|=d+1$. We denote the corresponding basis vector as $[A,B]\in F_{m,n}$.
Write $A=\{a_1<\cdots<a_s\}$ and $B=\{b_1<\cdots<b_t\}$ with $s,t\geq 2$. Applying the differential to different basis vectors gives
\begin{align*}
\partial^{F_{m,n}}([a,b])&=f_{a,b}
\\
\partial^{F_{m,n}}([A,b])&=\sum_{i=1}^s(-1)^{i-1}X_{a_i}[A-a_i,b]
\\
\partial^{F_{m,n}}([a,B])&=\sum_{j=1}^t(-1)^j Y_{b_j}[a,B-b_j]
\\
\partial^{F_{m,n}}([A,B])&=\sum_{i=1}^s(-1)^{i-1}X_{a_i}[A-a_i,b]+\sum_{j=1}^t(-1)^{s+j-1} Y_{b_j}[A,B-b_j]
\end{align*}


\end{construction}

\begin{ex}\label{ex230806a}
The labeled cell complexes $V_{2,2}$ and $V_{2,3}$ are exactly those in~\eqref{eq230806a} and~\eqref{eq230806c}
which we reproduce here to include both the vertices $(i,j)$ and their labels $X_iY_j$.

\begin{tikzpicture}[baseline=(current  bounding  box.east)]
     \node (L) at (-2, 1) {$V_{2,2} =$};
    \fill[gray!30, rounded corners=10pt] (0,0) rectangle (3,2);    
    \node (A) at (0, 2) {$(1,1)$};
    \node (B) at (3, 2) {$(1,2)$};
    \node (C) at (0, 0) {$(2,1)$};
    \node (D) at (3, 0) {$(2,2)$};
    
    \draw (A) --  (B);
    \draw (A) --  (C);
    \draw (C) --  (D);
    \draw (B) --  (D);

\end{tikzpicture} \hfill
\begin{tikzpicture}[baseline=(current  bounding  box.east)]
    \fill[gray!30, rounded corners=10pt] (0,0) rectangle (3,2);    
    \node (A) at (0, 2) {$X_1Y_1$};
    \node (B) at (3, 2) {$X_1Y_2$};
    \node (C) at (0, 0) {$X_2Y_1$};
    \node (D) at (3, 0) {$X_2Y_2$};
    
    \draw (A) --  (B);
    \draw (A) --  (C);
    \draw (C) --  (D);
    \draw (B) --  (D);

\end{tikzpicture} \hfill

\vspace{1cm}

\begin{tikzpicture}
     \node (L) at (-1.75, 1) {$V_{2,3} = $};
     
       \fill[gray!30, rounded corners=10pt] (0,0) rectangle (3,2);
     \fill[gray!50, rounded corners=10pt] (0,2) -- (1.5,1) -- (3,2) -- cycle;      
     \fill[gray!50, rounded corners=10pt] (0,0) -- (1.5,-1) -- (3,0) -- cycle;

    \node (A) at (0, 2) {$(1,1)$};
    \node (B) at (3, 2) {$(1,2)$};
    \node (C) at (0, 0) {$(2,1)$};
    \node (D) at (3, 0) {$(2,2)$};
    \node (E) at (1.5, 1) {$(1,3)$};
    \node (F) at (1.5, -1) {$(2,3)$};

    \draw (A) -- (B);
    \draw (A) -- (C);
    \draw (B) -- (D);
    \draw[dotted] (C) -- (D);
    
     \draw (A) -- (E);
    \draw (B) -- (E);
    \draw (C) -- (F);    
     \draw (E) -- (F);
    \draw (D) -- (F);
\end{tikzpicture}\hfill
\begin{tikzpicture}
     \node (L) at (-1.5, 1) {};
     
       \fill[gray!30, rounded corners=5pt] (0,0) rectangle (3,2);
     \fill[gray!50, rounded corners=5pt] (0,2) -- (1.5,1) -- (3,2) -- cycle;      
     \fill[gray!50, rounded corners=5pt] (0,0) -- (1.5,-1) -- (3,0) -- cycle;

    \node (A) at (0, 2) {$X_1Y_1$};
    \node (B) at (3, 2) {$X_1Y_2$};
    \node (C) at (0, 0) {$X_2Y_1$};
    \node (D) at (3, 0) {$X_2Y_2$};
    \node (E) at (1.5, 1) {$X_1Y_3$};
    \node (F) at (1.5, -1) {$X_2Y_3$};

    \draw (A) -- (B);
    \draw (A) -- (C);
    \draw (B) -- (D);
    \draw[dotted] (C) -- (D);
    
     \draw (A) -- (E);
    \draw (B) -- (E);
    \draw (C) -- (F);    
     \draw (E) -- (F);
    \draw (D) -- (F);
\end{tikzpicture}\hfill

Labeling the remaining faces of $V_{2,2}$, one obtains~\eqref{eq230909b}:

\begin{tikzpicture}
     \node (L) at (-1.5, 1) {$V_{2,2} = $};
    \fill[gray!30, rounded corners=10pt] (0,0) rectangle (3,2);    
   
  \node (A) at (0, 2) {$(1,1)$};
  \node (B) at (3, 2) {$(1,2)$};
 \node (C) at (0, 0) {$(2,1)$};
  \node (D) at (3, 0) {$(2,2)$};
  
 \node (E) at (1.5,1) {\tiny{$(12,12)$}};
    
    \draw (A) -- node[midway, above] {\tiny{$(1,12)$}} (B);
    \draw (A) -- node[midway, left] {\tiny{$(12,1)$}} (C);
   \draw (C) -- node[midway, below] {\tiny{$(2,12)$}} (D);
    \draw (B) -- node[midway, right] {\tiny{$(12,2)$}} (D);

\end{tikzpicture}  \hfill\begin{tikzpicture}
    \fill[gray!30, rounded corners=10pt] (0,0) rectangle (3,2);    
   
     \node (A) at (0, 2) {$X_1Y_1$};
    \node (B) at (3, 2) {$X_1Y_2$};
    \node (C) at (0, 0) {$X_2Y_1$};
    \node (D) at (3, 0) {$X_2Y_2$};
   
    \node (E) at (1.5,1) {\tiny{$X_1X_2Y_1Y_2$}};
    
    \draw (A) -- node[midway, above] {\tiny{$X_1Y_1Y_2$}} (B);
    \draw (A) -- node[midway, left] {\tiny{$X_1X_2Y_1$}} (C);
   \draw (C) -- node[midway, below] {\tiny{$X_2Y_1Y_2$}} (D);
    \draw (B) -- node[midway, right] {\tiny{$X_1X_2Y_2$}} (D);

\end{tikzpicture} 

Basis vectors are of the form $[A,B]$ 
with $A\subseteq[m]$ and $B\subseteq[n]$. 
Example computations of the differential are
\begin{align*}
[12,12]&\mapsto X_1[2,12]-X_2[2,12]+Y_1[12,2]-Y_2[12,1]\\
[1,12]&\mapsto -Y_1[1,2]+Y_2[1,1]
\end{align*}
which one sees in the first two columns displayed in the resolution following~\eqref{eq230806a}.
\end{ex}

\begin{fact}[\protect{Visscher~\cite{MR2262383}}]\label{fact230806a}
For all positive integers $m,n$, the geometric realization of $V_{m,n}$ is a regular cell complex, and 
$F_{V_{m,n}}$ is a cellular minimal resolution of $\edgeIdeal[K_{m,n}]$.
(At this point, we acknowledge explicitly that we do not distinguish notationally between the combinatorial and geometric formulations of Visscher's construction.)
\end{fact}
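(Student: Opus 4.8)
The plan is to recognize $V_{m,n}$ as a product of two simplices and then combine the Bayer--Sturmfels acyclicity criterion with a one-line check of minimality. First I would identify the underlying cell complex. The nonempty faces $(A,B)$ of $V_{m,n}$, with $\emptyset\neq A\subseteq[m]$ and $\emptyset\neq B\subseteq[n]$, are exactly the products $\sigma_A\times\tau_B$ of a face $\sigma_A$ of the simplex $\Delta^{m-1}$ on vertex set $[m]$ with a face $\tau_B$ of the simplex $\Delta^{n-1}$ on $[n]$, and $\dim(\sigma_A\times\tau_B)=(|A|-1)+(|B|-1)$ agrees with the stated dimension. Thus the geometric realization of $V_{m,n}$ is the polytope $\Delta^{m-1}\times\Delta^{n-1}$ with its product cell structure; for $V_{2,2}$ this recovers the square and for $V_{2,3}$ the triangular prism of the pictures. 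Since a product of polytopes is a polytope and each closed cell $\sigma_A\times\tau_B$ is itself a polytope, hence a closed ball attached along its boundary by a homeomorphism, this is a regular cell complex, which proves the first assertion.

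Next I would record the labels and reconcile the differentials. The vertices of the cell $(A,B)$ are the $(i,j)$ with $i\in A$ and $j\in B$, carrying the square-free labels $X_iY_j$, so the LCM label of $(A,B)$ is $\mathbf{m}_{A,B}=\prod_{i\in A}X_i\prod_{j\in B}Y_j$. In particular the vertex labels generate $\edgeIdeal[K_{m,n}]$. I would then check that the differential of Construction~\ref{con230806a} is precisely the cellular differential $\partial^{F_C}$ of Construction~\ref{const231022a} on this labeled complex: its coefficients $\pm X_{a_i}$ and $\pm Y_{b_j}$ are the label quotients $\mathbf{m}_{A,B}/\mathbf{m}_{A\setminus a_i,\,B}$ and $\mathbf{m}_{A,B}/\mathbf{m}_{A,\,B\setminus b_j}$, while the signs $(-1)^{i-1}$ and $(-1)^{s+j-1}$ are the Koszul signs of the tensor-product differential on $C_\bullet(\Delta^{m-1})\otimes C_\bullet(\Delta^{n-1})$ coming from the vertex orderings.

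For acyclicity I would apply Bayer and Sturmfels' criterion from Section~\ref{sec230728b}: $F_{V_{m,n}}$ resolves $S/\edgeIdeal[K_{m,n}]$ provided that for every monomial $\mathbf{b}$ the subcomplex $(V_{m,n})_{\preceq\mathbf{b}}$ consisting of faces whose label divides $\mathbf{b}$ is empty or acyclic. Because every label is square-free, whether $\mathbf{m}_{A,B}\mid\mathbf{b}$ depends only on the supports $P=\{i: X_i\mid\mathbf{b}\}$ and $Q=\{j: Y_j\mid\mathbf{b}\}$, the condition being exactly $A\subseteq P$ and $B\subseteq Q$. Hence $(V_{m,n})_{\preceq\mathbf{b}}$ is empty when $P$ or $Q$ is empty, and otherwise is the full collection $\{(A,B):\emptyset\neq A\subseteq P,\ \emptyset\neq B\subseteq Q\}$, including the top cell $(P,Q)$; this is the entire polytope $\Delta^{|P|-1}\times\Delta^{|Q|-1}$, which is convex and therefore contractible. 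In every case the subcomplex is acyclic over $\bbk$, so the criterion yields that $F_{V_{m,n}}$ is a resolution.

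Finally, minimality holds iff no two faces in a covering relation share a label, equivalently every entry of every differential is a nonunit. Each facet of $(A,B)$ is of the form $(A\setminus a,B)$ or $(A,B\setminus b)$, and the associated label quotient is the single variable $X_a$ or $Y_b$, which has positive degree; hence every differential entry lies in the homogeneous maximal ideal and the resolution is minimal. I expect the one genuinely delicate step to be the bookkeeping in the middle paragraph, namely confirming that the prescribed signs of Construction~\ref{con230806a} orient the cells of $\Delta^{m-1}\times\Delta^{n-1}$ compatibly, so that $F_{V_{m,n}}$ really is the cellular complex to which the criterion applies (equivalently, that it is the tensor product of the two simplicial chain complexes and satisfies $\partial^2=0$). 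Once that identification is secured, the acyclicity and minimality arguments are the essentially formal consequences of the product-of-simplices structure sketched above.
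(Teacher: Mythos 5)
Your proof is correct, but note that the paper does not actually prove this statement: it is quoted as a Fact and attributed to Visscher~\cite{MR2262383}, so there is no in-paper argument to compare against. Your route --- identifying $V_{m,n}$ with the product polytope $\Delta^{m-1}\times\Delta^{n-1}$, so that every nonempty $(V_{m,n})_{\leq f}$ is itself a product of simplices and hence contractible, then invoking Bayer--Sturmfels and observing that all facet label quotients are single variables --- is the standard argument and, as far as I can tell, essentially Visscher's own. One small caveat: the dimension of $\sigma_A\times\tau_B$ is $|A|+|B|-2$, which matches the paper's actual usage (a $(d-1)$-cell has $|A|+|B|=d+1$) and your examples, but not the literal sentence ``the dimension of the face $(A,B)$ is $|A|+|B|-1$'' in Construction~\ref{con230806a}, which appears to be an off-by-one slip in the paper rather than a problem with your argument; and the sign $(-1)^{s+j-1}$ in the paper differs from the usual Koszul sign $(-1)^{s+j}$ by an overall sign in each degree, which is harmless since different incidence functions give isomorphic complexes (Fact~\ref{fact231022b}).
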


We are interested in how this situation extends to the following non-square-free situation.
Fix an \emph{edge-weighting} on $G$, i.e., a function $\omega\colon E\to\bbn=\{1,2,3,\ldots\}$ where $E$ is the edge set of $G$; 
this is just a way of equipping each edge of $G$ with a positive integer weight, hence the name. 
A graph $G$ equipped with an edge weighting $\omega$ is an \emph{edge-weighted graph} which we denote $G_{\omega}$. 
(We also consider the natural vertex-weighted situation, though it is not nearly as interesting.)

Paulsen and Sather-Wagstaff~\cite{paulsen:eiwg} 
introduced and studied the edge ideal of edge-weighted graph, which is the non-square-free monomial ideal generated by the weighted edges of $G^\omega$: 
$$\edgeIdeal[G^\omega]=\newIdeal{(X_iX_j)^{\omega(X_iX_j)}\mid\text{$X_iX_j$ is an edge of $G$}}.$$
For example, the ideal $J$ in Example~\ref{ex230728a} is exactly $\edgeIdeal[K_{2,2}^\omega]$ where $a_{i,j}=\omega(X_iY_j)$.
Similarly, the ideal $J$ in Example~\ref{ex230728b} is a special case of $\edgeIdeal[K_{2,3}^\omega]$ where $\omega$ has a specific form.

Typically, non-square-free monomial ideals are harder to understand than square-free ones, in part, due to 
powerful combinatorial tools like the Stanley-Reisner correspondence in the square-free situation.
However, interest has increased recently in these non-square-free edge ideals; see, e.g., Diem, et al.~\cite{diem2023sequentially}, Seyed Fakhari, et al.~\cite{MR4296835},
and Wei~\cite{ShuaiType,wei2023cohenmacaulay}.

\begin{construction}\label{con230909a}
Let $m,n\in\bbn$ be given, and let $\omega$ be an edge-weighting on $K_{m,n}$. 
Let $V_{m,n}^\omega$ denote Vischer's cell complex $V_{m,n}$ with each vertex $(i,j)$ labeled with the monomial generator
$(X_iY_j)^{\omega(X_iY_j)}$.
\end{construction}

As in the unweighted case, we explicitly describe the chain complex $F_{m,n}^\omega$ including its differential.
The basis in degree 0 is $1\in S=F_{m,n,0}^\omega$.
For $d\geq 1$, the basis vectors of $F_{m,n,d}^\omega$ correspond to the $(d-1)$-dimensional faces of $V_{m,n}$, i.e., ordered pairs $(A,B)\in V_{m,n}$ with
$\emptyset\neq A\subseteq [m]$ and $\emptyset\neq B\subseteq [n]$ such that $|A|+|B|=d+1$. We denote the corresponding basis vector as $[A,B]\in F_{m,n}^\omega$.
Write $A=\{a_1<\cdots<a_s\}$ and $B=\{b_1<\cdots<b_t\}$ with $s,t\geq 2$. Applying the differential to different basis vectors gives
\begin{align*}
\partial^{F_{m,n}^\omega}([a,b])&=f_{a,b}
\\
\partial^{F_{m,n}^\omega}([A,b])&=\sum_{i=1}^s(-1)^{i-1}\frac{f_{A,b}}{f_{A-a_i,b}}[A-a_i,b]
\\
\partial^{F_{m,n}^\omega}([a,B])&=\sum_{j=1}^t(-1)^j \frac{f_{a,B}}{f_{a,B-b_j}}[a,B-b_j]
\\
\partial^{F_{m,n}^\omega}([A,B])&=\sum_{i=1}^s(-1)^{i-1}\frac{f_{A,b}}{f_{A-a_i,b}}[A-a_i,b]+\sum_{j=1}^t(-1)^{s+j-1} \frac{f_{a,B}}{f_{a,B-b_j}}[A,B-b_j]
\intertext{where}
f_{a,b}&=(X_aY_b)^{\omega(X_aY_b)}\\
f_{A,b}&=\operatorname{lcm}(f_{a_i,b}\mid i\in[s])\\
f_{a,B}&=\operatorname{lcm}(f_{a,b_j}\mid j\in[t])\\
f_{A,B}&=\operatorname{lcm}(f_{a_i,b_j}\mid i\in[s], j\in[t]).
\end{align*}


When it is convenient, we write $\ol F_{m,n}^\omega$ for the truncation and shift of $F_{m,n}^\omega$ that is a candidate to be a resolution of $\edgeIdeal[K_{m,n}]$.
$$\xymatrix@R=5mm{
F_{m,n}^\omega=
&\ar[r] 
&S^{C_2}\ar[r] 
&S^{C_1}\ar[r] 
&S^{C_0}\ar[r] 
&S\ar[r] 
& 0
\\
\ol F_{m,n}^\omega=
&\ar[r] 
&S^{C_3}\ar[r] 
&S^{C_2}\ar[r] 
&S^{C_1}\ar[r] 
&S^{C_0}\ar[r] 
& 0
}$$

With this background behind us, here is the main result of our paper. It gives an inductive characterization of the weighted complete bipartite graphs such that
Visscher's construction yields a  resolution, noting that it is automatically cellular. It is minimal by Fact~\ref{fact231022b}\eqref{fact231022b2}. 

\begin{thm}\label{thm230730a}
Let $m,n\in\bbn$ be given, and let $\omega$ be an edge-weighting on $K_{m,n}$. 
Set $\alpha=\min\{\omega(X_iY_j)\mid i=1,\ldots,m; j=1,\ldots,n\}$.
Then $F_{m,n}^\omega$ is a (cellular, minimal) resolution of $S/\edgeIdeal[K_{m,n}^\omega]$ if and only if one of the following conditions holds:
\begin{enumerate}[\rm 1.]
\item\label{thm230730a1} $m=1$ or $n=1$, or
\item\label{thm230730a2} $m,n\geq 2$ and there is a vertex $v$ such that 
\begin{enumerate}[\rm(\ref{thm230730a}.\ref{thm230730a2}.a)]
\item $\omega(vw)=\alpha$ for all $w$ adjacent to $v$, and
\item Visscher's construction for $K_{m,n}^\omega-v$ yields a (cellular, minimal) resolution.
\end{enumerate}
\end{enumerate}
\end{thm}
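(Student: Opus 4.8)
The plan is to prove the equivalence through the Bayer--Sturmfels acyclicity criterion recalled in Section~\ref{sec230728b}: the cellular complex $F_{m,n}^\omega$ resolves $S/\edgeIdeal[K_{m,n}^\omega]$ if and only if, for every monomial $\mathbf{m}=\prod_a X_a^{c_a}\prod_b Y_b^{d_b}$, the subcomplex $\Gamma_{\mathbf m}$ of $V_{m,n}^\omega$ spanned by the faces whose lcm-label divides $\mathbf{m}$ is empty or acyclic over $\bbk$. First I would record the geometric picture $V_{m,n}=\Delta^{m-1}\times\Delta^{n-1}$ together with the computation $f_{A,B}=\prod_{a\in A}X_a^{\max_{b\in B}w_{ab}}\prod_{b\in B}Y_b^{\max_{a\in A}w_{ab}}$, where $w_{ab}=\omega(X_aY_b)$. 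This converts the divisibility condition $f_{A,B}\mid\mathbf{m}$ into the purely combinatorial statement $A\times B\subseteq R_{\mathbf m}$, where $R_{\mathbf m}=\{(a,b): w_{ab}\leq\min(c_a,d_b)\}$; hence $\Gamma_{\mathbf m}=\{(A,B): \emptyset\neq A,\ \emptyset\neq B,\ A\times B\subseteq R_{\mathbf m}\}$ is the complex of complete bipartite subgraphs (``combinatorial rectangles'') of the bipartite relation $R_{\mathbf m}$. The base case $m=1$ or $n=1$ is then immediate: $V_{1,n}$ is a full simplex, so $F_{1,n}^\omega$ is the Taylor complex, which is always a resolution (and is minimal here because the distinct variables $Y_b$ make every added generator strictly enlarge the lcm).

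For the inductive step with $m,n\geq 2$, the central tool is a reduction lemma, which I would prove for a \emph{good} vertex $v$ (say $v=X_m$ with $w_{mb}=\alpha$ for all $b$): $F_{m,n}^\omega$ is a resolution if and only if $F_{m,n}^\omega-v=F_{m-1,n}^\omega$ is. Fix $\mathbf{m}$ and split $\Gamma_{\mathbf m}$ according to whether the deleted row $m$ occurs in $A$. If $c_m<\alpha$, then no pair $(m,b)$ lies in $R_{\mathbf m}$ (every weight is $\geq\alpha$), so $\Gamma_{\mathbf m}$ is exactly the corresponding subcomplex for $V_{m-1,n}^\omega$. If instead $c_m\geq\alpha$, then goodness forces $(m,b)\in R_{\mathbf m}$ for \emph{every} column $b$ occurring at all in $R_{\mathbf m}$ (such a $b$ has $d_b\geq\alpha$, whence $\min(c_m,d_b)\geq\alpha=w_{mb}$); consequently every face $(A',B)\in\Gamma_{\mathbf m}$ satisfies $(A'\cup\{m\},B)\in\Gamma_{\mathbf m}$. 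The straight-line homotopy contracting the $\Delta^{m-1}$-coordinate to the vertex $m$ then maps $\Gamma_{\mathbf m}$ into itself and retracts it onto the simplex $\{(\{m\},B):\emptyset\neq B\}$, so $\Gamma_{\mathbf m}$ is contractible. Thus for good $v$ the only monomials that can witness non-acyclicity are those with $c_m<\alpha$, and for these $\Gamma_{\mathbf m}$ agrees with the subcomplexes governing $F_{m-1,n}^\omega$; applying the acyclicity criterion to both complexes gives the reduction. This already yields the implication $(\Leftarrow)$ of the theorem, and, together with its own converse, reduces the implication $(\Rightarrow)$ to producing a good vertex.

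The remaining---and hardest---step is the \emph{existence} of a good vertex: if $m,n\geq 2$ and $F_{m,n}^\omega$ is a resolution, some vertex $v$ has $\omega(vw)=\alpha$ for all neighbours $w$. I would argue by contraposition. Assuming no vertex is good means that in the minimal-weight relation $R_\alpha=\{(a,b):w_{ab}=\alpha\}$ no vertex on either side has full degree, and I must produce a single monomial $\mathbf{m}$ with $\Gamma_{\mathbf m}$ not acyclic. Because $\Gamma_{\mathbf m}$ is the biclique complex of $R_{\mathbf m}$, nontriviality can be read off from the relation: a disconnection of $R_{\mathbf m}$ that no single vertex dominates gives $\widetilde{H}_0\neq0$ (already enough when $m=n=2$, where the only bad case is a perfect matching of minimal edges), while an induced cycle of length at least six gives $\widetilde{H}_1\cong\widetilde{H}_1(S^1)\neq0$. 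The strategy is to choose thresholds $c_a,d_b$ retaining a chosen alternating cycle through minimal edges---completing a path in $R_\alpha$ to a cycle by a single above-minimum edge when necessary---while capping all other thresholds just below their required weight so as to delete every chord and extraneous edge. Guaranteeing that such a chord-free witness can always be carved out of some $R_{\mathbf m}$ (equivalently, that the Dowker complex $K_X\simeq\Gamma_{\mathbf m}$ has nonvanishing reduced homology) is the main obstacle; the fact that the uniform monomial $\prod_a X_a^\alpha\prod_b Y_b^\alpha$ alone does not suffice---its $\Gamma$ may be an acyclic path even when no vertex is good---shows that non-uniform exponents are essential.

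Finally I would assemble the pieces by strong induction on $m+n$: the base case settles $m=1$ or $n=1$; for $m,n\geq 2$ the reduction lemma gives $(\Leftarrow)$ directly and, together with the existence of a good vertex, gives $(\Rightarrow)$. Cellularity is built into the construction, and minimality is automatic by Fact~\ref{fact231022b}\eqref{fact231022b2}, so ``resolution'' upgrades to ``cellular, minimal resolution'' throughout, matching Fact~\ref{fact230806a} in the unweighted case.
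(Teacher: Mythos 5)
Your reduction lemma is correct and is in fact a genuinely different, and in places cleaner, route than the one the paper takes for the converse direction. Where you contract each nonempty subcomplex $\Gamma_{\mathbf m}$ with $c_m\geq\alpha$ onto the simplex $\{(\{m\},B)\}$ by a straight-line homotopy in the $\Delta^{m-1}$ factor, the paper instead realizes $\ol F_{m+1,n}^\omega$ as the mapping cone of an explicit chain map from a twist of $\ol F_{m,n}^\mu\otimes_{S'}S$ into $(\ol F_{m,n}^\mu\otimes_{S'}S)\oplus\ol T$, with $\ol T$ a truncated Taylor/Koszul complex on the good vertex's edges, and then runs the long exact sequence in homology. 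Your identification $\Gamma_{\mathbf m}=\{(A,B):A\times B\subseteq R_{\mathbf m}\}$ together with the observation that $c_m<\alpha$ forces $\Gamma_{\mathbf m}$ to coincide with a subcomplex of $V_{m-1,n}^\mu$ also recovers, in one stroke, the paper's argument that failure of condition (\ref{thm230730a}.\ref{thm230730a2}.b) propagates upward. Both directions of the reduction are therefore in good shape, and your topological argument buys a shorter proof of acyclicity at the price of not exhibiting the mapping-cone structure the paper records.

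The genuine gap is exactly where you flag it: producing a non-acyclic $\Gamma_{\mathbf m}$ when no vertex is good. You go looking for induced six-cycles and chord-free witnesses and declare their existence ``the main obstacle,'' but no $\widetilde{H}_1$ is needed and the witness is far simpler. Suppose no vertex is good and pick a minimal edge, say $\omega(X_1Y_1)=\alpha$. Since $X_1$ is not good there is $j$ with $\omega(X_1Y_j)>\alpha$, and since $Y_1$ is not good there is $i$ with $\omega(X_iY_1)>\alpha$; set $\delta=\omega(X_iY_j)\geq\alpha$ and $f=X_1^\alpha Y_1^\alpha X_i^\delta Y_j^\delta$. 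Then the vertices $(1,1)$ and $(i,j)$ lie in $(V_{m,n}^\omega)_{\leq f}$; the vertices $(1,j)$ and $(i,1)$ are excluded because their labels carry $X_1$ (resp.\ $Y_1$) to an exponent strictly greater than $\alpha$; and every other vertex involves a variable absent from $f$. Hence $(V_{m,n}^\omega)_{\leq f}$ is two isolated points, $\widetilde{H}_0\neq 0$, and Bayer--Sturmfels closes the contrapositive. This is precisely what the paper does in its Steps~\ref{step230913b} and~\ref{step230913c}. With this witness inserted, your induction on $m+n$ goes through and the proof is complete.
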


\begin{ex}\label{ex230909a}
This theorem states that
$F_{1,n}^\omega$  is always a resolution of $S/\edgeIdeal[K_{1,n}^\omega]$, and similarly for $F_{m,1}^\omega$.
It is straightforward to show that this is the Taylor resolution in this case, so the Taylor resolution is minimal here. 

The theorem makes it easy to find $K_{2,2}^\omega$'s for which Visscher's construction does not give a resolution; for instance, if all the edge-weights are distinct,
then condition~(\ref{thm230730a}.\ref{thm230730a2}.a) fails. Here are some other cases:
\begin{align*}
\xymatrix@R=3mm@C=7mm{&X_1\ar@{-}[rr]^2\ar@{-}[rrdd]_<<<3&&Y_1\ar@{-}[lldd]^<<<3\\
\text{fail} \\
&X_2\ar@{-}[rr]_2&&Y_2}
&&&&
\xymatrix@R=3mm@C=7mm{&X_1\ar@{-}[rr]^2\ar@{-}[rrdd]_<<<2&&Y_1\ar@{-}[lldd]^<<<3\\
\text{satisfy} \\
&X_2\ar@{-}[rr]_2&&Y_2}
\end{align*}
Moreover, this theorem explains the statement~(*) in Example~\ref{ex230728a}.
It also shows why Visscher's construction gives a resolution for the ideals $J$ in Example~\ref{ex230728b}.
\end{ex} 

The proof of Theorem~\ref{thm230730a} is the content of Section~\ref{sec230728d}.
See Corollary~\ref{cor240505a} for the corresponding computation of Betti numbers.

\section{Background on Cellular Resolutions} \label{sec230728b}

Here we recall Bayer and Sturmfels'~\cite{MR1647559} construction of cellular chain complexes $F_C$ and their criteria for acyclicity (i.e., when $F_C$ is a resolution) and minimality.
Readers may also wish to consult Bruns and Herzog~\cite[Section~6.2]{bruns:cmr}. 
The section ends with our proof that Visscher's construction yields a minimal cellular resolution in the undirected vertex-weighted setting. 

A \emph{finite regular cell complex} is a topological space $C\neq\emptyset$ written as a finite union of pairwise disjoint \emph{open cells} $E_0,\ldots,E_t$ such that
each $E_i$ is homeomorphic to an open euclidean ball such that the closure $\ol{E_i}$ is compatibly homeomorphic to the corresponding closed euclidean ball. 
We assume that $E_0=\emptyset$, the unique cell of dimension $-1$. 
The \emph{vertices} of $C$ are the cells of dimension $0$, which we assume are $E_1,\ldots,E_s$.
The \emph{faces} of $E_i$ are the cells of $C$ contained in the closure $\ol{E_i}$.
The \emph{vertices} of a cell $E_i$ are the vertices of $C$ contained in the closure $\ol{E_i}$, i.e., these are the faces of $E_i$ of dimension 0.
Some examples are contained in Section~\ref{sec230728a}, in particular, Visscher's construction $V_{m,n}$ is a regular cell complex~\cite{MR2262383},
as is any finite topological simplicial complex.

Let $f_1,\ldots,f_s$ be monomials in the polynomial ring $S$, and let $C$ be a finite regular cell complex with cells as above.
The corresponding \emph{labeling} of $C$ is the function $\ell\colon\{E_0,\ldots,E_t\}\to S$ given by 
\begin{equation}
\ell(E_i)=\operatorname{lcm}(f_j\mid\text{$E_j$ is a vertex of $E_i$}).\label{eq231022a}
\end{equation}
The associated \emph{labeled cell complex} is the ordered pair $(C,\ell)$, which we usually write simply as $C$.
Again, see Section~\ref{sec230728a} for examples.
Given a monomial $f\in S$, we consider the following subcomplex of $C$:
$$C_{\leq f}=\bigcup_{\ell(E_i)\mid f}E_i.
$$
In other words, $C_{\leq f}$ is the subcomplex of $C$ induced by the vertices $E_i$ with $f_i\mid f$.

\begin{ex}\label{ex231022a}
Consider the labeled cell complex~\eqref{eq230909b} with $\alpha\leq\beta\leq\gamma$:\\
\begin{center}
\begin{tikzpicture}[baseline=(current  bounding  box.east)]
     \node (L) at (-2, 1) {$C =$};
    \fill[gray!30, rounded corners=10pt] (0,0) rectangle (3,2);    
     \node (A) at (0, 2) {$X_1^{\alpha}Y_1^{\alpha}$};
    \node (B) at (3, 2) {$X_1^{\alpha}Y_2^{\alpha}$};
    \node (C) at (0, 0) {$X_2^{\beta}Y_1^{\beta}$};
    \node (D) at (3, 0) {$X_2^{\gamma}Y_2^{\gamma}$};
   
    \node (E) at (1.5,1) {\tiny{$X_1^{\alpha}X_2^{\gamma}Y_1^{\beta}Y_2^\gamma$}};
    
    \draw (A) -- node[midway, above] {\tiny{$X_1^{\alpha}Y_1^{\alpha}Y_2^{\alpha}$}} (B);
    \draw (A) -- node[midway, left] {\tiny{$X_1^{\alpha}X_2^{\beta}Y_1^{\beta}$}} (C);
   \draw (C) -- node[midway, below] {\tiny{$X_2^{\gamma}Y_1^{\beta}Y_2^{\gamma}$}} (D);
    \draw (B) -- node[midway, right] {\tiny{$X_1^{\alpha}X_2^{\gamma}Y_2^{\gamma}$}} (D);

\end{tikzpicture} \hfill
\end{center}
With the monomial $f=X_1^\beta X_2^\beta Y_1^\beta$, for instance, we have
$$\xymatrix@R=3mm@C=7mm{&&X_1^{\alpha}Y_1^{\alpha}\ar@{-}[rr]^-{X_1^{\alpha}Y_1^{\alpha}Y_2^{\alpha}}\ar@{-}[dd]_-{X_1^{\alpha}X_2^{\beta}Y_1^{\beta}}&&X_1^{\alpha}Y_2^{\alpha}\\
C_{\leq X_1^\beta X_2^\beta Y_1^\beta}= &&&\\
&&X_2^{\beta}Y_1^{\beta}.}
$$
\end{ex}

An \emph{incidence function} on  a finite regular cell complex $C$ is a function $\epsilon$ with domain the set of all ordered pairs $(E_i,E_j)$ such that 
$\dim(E_j)=\dim(E_i)$ and with $\epsilon(E_i,E_j)\in\{0,\pm 1\}$ subject to the following:
\begin{enumerate}[(a)]
\item $\epsilon(E_i,E_j)\neq 0$ if and only if $E_j$ is a face of $E_i$;
\item $\epsilon(E_i,E_0)=1$ for each vertex $E_i$; and
\item $\epsilon(E_i,E_j)\epsilon(E_j,E_k)+\epsilon(E_i,E_{j'})\epsilon(E_{j'},E_k)=0$
whenever $\dim(E_k)=\dim(E_i)-2$, and $E_j$,$E_{j'}$ are the codimension-1 faces of $E_i$ having $E_k$ as a face.
\end{enumerate}

\begin{ex}\label{ex231022b}
For the previous example, we have the incidence function coming from the standard counterclockwise orientation of our shaded rectangle:\\
\begin{center}
\begin{tikzpicture}[baseline=(current  bounding  box.east)]
    \fill[gray!30, rounded corners=10pt] (0,0) rectangle (3,2);    
     \node (A) at (0, 2) {$X_1^{\alpha}Y_1^{\alpha}$};
    \node (B) at (3, 2) {$X_1^{\alpha}Y_2^{\alpha}$};
    \node (C) at (0, 0) {$X_2^{\beta}Y_1^{\beta}$};
    \node (D) at (3, 0) {$X_2^{\gamma}Y_2^{\gamma}$};
   
    \node (E) at (1.5,1) {$\circlearrowleft$};
    
    \draw [<-] (A) -- node[midway, above] {\tiny{$+$ \hspace{1.3cm} $ -$}} (B);
    \draw (A)[->] -- node[midway, right] {\tiny{$+$ }} (C);
   \draw[->] (C) -- node[midway, below] {\tiny{$-$ \hspace{1.3cm}$ +$}} (D);
    \draw [<-] (B) -- node[midway, left] {\tiny{ $ +$}} (D);

    \node () at (1.5, .2) {\tiny{$+$}};
    \node () at (1.5, 1.8) {\tiny{$+$}};
    
    \node () at (-.2, .5) {\tiny{$+$}};
    \node () at (3.2, 1.6) {\tiny{$+$}};
     \node () at (-.2, 1.6) {\tiny{$-$}};
    \node () at (3.2, .4) {\tiny{$-$}};

\end{tikzpicture} \hfill
\end{center}
Here is another one that is useful for later
\begin{equation}
\begin{tikzpicture}[baseline=(current  bounding  box.east)]
    \fill[gray!30, rounded corners=10pt] (0,0) rectangle (3,2);    
     \node (A) at (0, 2) {$X_1^{\alpha}Y_1^{\alpha}$};
    \node (B) at (3, 2) {$X_1^{\alpha}Y_2^{\alpha}$};
    \node (C) at (0, 0) {$X_2^{\beta}Y_1^{\beta}$};
    \node (D) at (3, 0) {$X_2^{\gamma}Y_2^{\gamma}$.};
   
    
    \draw [-] (A) -- node[midway, above] {\tiny{$+$ \hspace{1.3cm} $ -$}} (B);
    \draw (A)[-] -- node[midway, right] {\tiny{$-$ }} (C);
   \draw[-] (C) -- node[midway, below] {\tiny{$+$ \hspace{1.3cm}$ -$}} (D);
    \draw [-] (B) -- node[midway, left] {\tiny{ $ +$}} (D);

    \node () at (1.5, .2) {\tiny{$+$}};
    \node () at (1.5, 1.8) {\tiny{$-$}};
    
    \node () at (-.2, .5) {\tiny{$+$}};
    \node () at (3.2, 1.6) {\tiny{$-$}};
     \node () at (-.2, 1.6) {\tiny{$-$}};
    \node () at (3.2, .4) {\tiny{$+$}};

\end{tikzpicture} \hfill\label{eq231022b}
\end{equation}
\end{ex}

With this background information, we can now construct Bayer and Sturmfels cellular chain complex.

\begin{construction}\label{const231022a}
Let $C$ be a finite regular cell complex as above, with incidence function $\epsilon$ and labeled with monomials $f_1,\ldots,f_s$.
For each $i\in\bbz$, let $C_i$ be the set of $(i-1)$-dimensional cells of $C$.
Consider the sequence of free $S$-modules 
\begin{equation*}F_C=\qquad \cdots
\xra{\partial^{F_C}_{4}} \underbrace{S^{C_2}}_{=F_{C,3}}
\xra{\partial^{F_C}_{3}} \underbrace{S^{C_1}}_{=F_{C,2}}
\xra{\partial^{F_C}_{2}} \underbrace{S^{C_0}}_{=F_{C,1}}
\xra{\partial^{F_C}_{1}} \underbrace{S}_{=F_{C,0}}\to 0
\end{equation*}
where, for each basis vector $E_i\in C_k$, we have
$$\partial^{F_C}_k=\sum_{\dim(E_j)=k-1}\epsilon(E_i,E_j)\frac{\ell(E_i)}{\ell(E_j)}E_j$$
where $\ell(E_i)$ is the monomial label on $E_i$ from~\eqref{eq231022a}.
\end{construction}

\begin{ex}\label{ex231022c}
With $C$ as in Example~\ref{ex231022a} and $\epsilon$ as in~\eqref{eq231022b}, one checks readily that $F_C$
is exactly the complex from~\eqref{eq231022d}. For instance, for the second column of $\partial_2^{F_C}$, let $E=(E_3-E_4)$ be the bottom horizontal edge of $C$, and compute:
$$\partial_2^{F_C}(E)=
\frac{X_2^{\gamma}Y_1^{\beta}Y_2^{\gamma}}{X_2^{\beta}Y_1^{\beta}}E_3
-\frac{X_2^{\gamma}Y_1^{\beta}Y_2^{\gamma}}{X_2^{\gamma}Y_2^{\gamma}}E_4
=
X_2^{\gamma-\beta}Y_2^{\gamma}E_3
-Y_1^{\beta}E_4.
$$
\end{ex}

\begin{fact}\label{fact231022b}
Let $C$ be a finite regular cell complex as above, with incidence function $\epsilon$ and labeled with monomials $f_1,\ldots,f_s$.
\begin{enumerate}[(a)]
\item The sequence $F_C$ is a chain complex with $\HH_0(F_C)=S/\newIdeal{f_1,\ldots,f_s}$.
\item Different incident functions on $C$ give isomorphic complexes.
\item \label{fact231022b1}
The complex $F_C$ may or may not be minimal. More specifically, it is minimal if and only if for every face $E_i$ of $C$ and every codimension-1 face $E_j$ of $E_i$, we have 
$\ell(E_i)\neq\ell(E_j)$, since the coefficient $\pm\ell(E_i)/\ell(E_j)$ is a unit if and only if $\ell(E_i)=\ell(E_j)$.
\item \label{fact231022b2}
In particular, the complex $F^\omega_{m,n}$ is always minimal. Indeed, coefficients of the form
$\pm\ell([A,B])/\ell([A-a,B])$ are divisible by $X_a$, and  the coefficients of the form
$\pm\ell([A,B])/\ell([A,B-b])$ are divisible by $Y_b$.
\end{enumerate}
\end{fact}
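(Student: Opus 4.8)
The plan is to treat the four assertions in turn, observing that parts (a)--(c) are the standard Bayer--Sturmfels package (see \cite{MR1647559} and \cite[Section~6.2]{bruns:cmr}) specialized to our notation, while (d) is the short computation specific to Visscher's complex that is the real payoff. I would open by recording the one structural fact used throughout: whenever $E_j$ is a face of $E_i$, the vertices of $E_j$ form a subset of the vertices of $E_i$, so by~\eqref{eq231022a} the label $\ell(E_j)$ divides $\ell(E_i)$; hence every coefficient $\ell(E_i)/\ell(E_j)$ occurring in Construction~\ref{const231022a} is a genuine monomial of $S$.

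For (a), I would verify $\partial^2=0$ by a direct cancellation. Applying $\partial$ twice to a cell $E_i$ of dimension $d$ and collecting the coefficient of a face $E_l$ of dimension $d-2$ produces $\bigl(\sum_{E_j}\epsilon(E_i,E_j)\epsilon(E_j,E_l)\bigr)\,\ell(E_i)/\ell(E_l)$, where the middle labels $\ell(E_j)$ cancel. The sum runs over the (exactly two) codimension-one faces $E_j$ of $E_i$ having $E_l$ as a face, so axiom~(c) of the incidence function forces it to vanish. For $\HH_0$, note that for a vertex $E_i$ one has $\ell(E_i)=f_i$, $\ell(\varnothing)=\operatorname{lcm}(\varnothing)=1$, and $\epsilon(E_i,\varnothing)=1$ by axiom~(b), so the first differential sends $E_i\mapsto f_i$; thus its image is $\newIdeal{f_1,\ldots,f_s}$ and $\HH_0(F_C)=S/\newIdeal{f_1,\ldots,f_s}$.

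For (b), I would invoke the classical fact that any two incidence functions on a regular cell complex differ by a reorientation of cells, i.e.\ there is a sign function $\nu$ on cells with $\epsilon'(E_i,E_j)=\nu(E_i)\nu(E_j)\epsilon(E_i,E_j)$ for all codimension-one face pairs. Then the diagonal map $\phi(E_i)=\nu(E_i)E_i$ is an isomorphism $F_C^{\epsilon}\to F_C^{\epsilon'}$ commuting with the differentials, since $\nu(E_i)^2=1$ makes the extra signs reorganize exactly into the primed incidence numbers. \textbf{This is the only genuinely topological input, and is where I expect the single real obstacle to lie} in a self-contained treatment: showing that the discrepancy $\epsilon\epsilon'$ is a coboundary uses the contractibility of the cell closures, and in practice I would simply cite \cite{MR1647559} for it.

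For (c), minimality of a complex of finitely generated free modules over the graded ring $S$ with irrelevant maximal ideal $\m=\newIdeal{X_1,\ldots}$ means every matrix entry of every differential lies in $\m$. By Construction~\ref{const231022a} the nonzero entries are exactly the monomials $\pm\ell(E_i)/\ell(E_j)$ with $E_j$ a codimension-one face of $E_i$, and such a monomial is a unit precisely when it equals $1$, that is, when $\ell(E_i)=\ell(E_j)$; this gives~\eqref{fact231022b1}. Finally, for (d) I would apply this criterion to $F_{m,n}^\omega$, whose codimension-one faces of $[A,B]$ are the $[A-a_i,B]$ and $[A,B-b_j]$. Comparing variable degrees of the labels settles it at once: $X_{a_i}$ occurs in $\ell([A,B])=f_{A,B}$ with exponent $\max_{b\in B}\omega(X_{a_i}Y_b)\geq 1$ but is absent from $f_{A-a_i,B}$, so $\ell([A,B])/\ell([A-a_i,B])$ is divisible by $X_{a_i}$; symmetrically $\ell([A,B])/\ell([A,B-b_j])$ is divisible by $Y_{b_j}$ since $\deg_{Y_{b_j}}f_{A,B}=\max_{a\in A}\omega(X_aY_{b_j})\geq 1$ while $Y_{b_j}$ does not appear in $f_{A,B-b_j}$. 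Every nonzero coefficient is thus a non-unit, so by~\eqref{fact231022b1} the complex $F_{m,n}^\omega$ is minimal, as claimed.
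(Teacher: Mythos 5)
Your proposal is correct and follows essentially the same route as the paper: parts (a)--(c) are the standard Bayer--Sturmfels package, which the paper recalls with citation rather than proof, and your divisibility argument for (d) --- $X_{a}$ divides $\ell([A,B])/\ell([A-a,B])$ because $X_{a}$ occurs in $f_{A,B}$ with exponent $\max_{b\in B}\omega(X_{a}Y_b)\geq 1$ but is absent from $f_{A-a,B}$, and symmetrically for $Y_b$ --- is precisely the justification embedded in the statement of Fact~\ref{fact231022b}\eqref{fact231022b2}, applied via the minimality criterion of part~\eqref{fact231022b1}. Your explicit verifications of (a) and (c), together with the honest flag that (b) reduces to the cited topological fact that incidence functions on a regular cell complex differ by a reorientation, simply fill in details the paper delegates to \cite{MR1647559} and \cite[Section~6.2]{bruns:cmr}.
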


The complex $F_C$ may or may not be a resolution; see Fact~\ref{fact230909a}. But the following is an important example of when it is. 

\begin{ex}\label{ex231022e}
Let $\Delta$ be the $(s-1)$-dimensional simplex on $s$ vertices $E_1,\ldots,E_s$.
Specify $\epsilon$.
Then $F_\Delta$ is a resolution of  $S/\newIdeal{f_1,\ldots,f_s}$, called the \emph{Taylor resolution}~\cite{taylor:igmrs}.
\end{ex}

We need one more collection of tools to state Bayer and Sturmfels' criterion for $F_C$ to be a resolution.

Let $C$ be a finite regular cell complex as above, with incidence function $\epsilon$.
If one labels $C$ with the monomials $1,1,\ldots,1\in\bbk$, then the homology of $F_C$ is the 
\emph{reduced cellular homology} of $C$, denoted
$\wti\HH_{i}(C)=\HH_{i}(F_C)$.
This is isomorphic to the reduced singular homology of $C$ with coefficients in $\bbk$. 
We say that $C$ is \emph{acyclic} (over $\bbk$) if $\wti\HH_{i}(C)=0$ for all $i$. 

\begin{fact}[\protect{Bayer and Sturmfels~\cite{MR1647559}}]\label{fact230909a}
$F_C$ is a resolution if and only if for each monomial $f\in S$ the subcomplex $C_{\leq f}$ is either empty or acyclic.
\end{fact}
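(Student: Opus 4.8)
The plan is to prove the criterion by exploiting the fine $\bbz^d$-grading on $S=\bbk[X_1,\dots,X_d]$, reducing the global statement about $F_C$ to a computation of reduced cellular homology on each multigraded strand. First I would record that $F_C$ is a complex of finitely generated \emph{multigraded} free modules: assigning the basis vector $E_i$ of $F_{C,k}$ the multidegree of the monomial $\ell(E_i)$, the differential of Construction~\ref{const231022a} is multihomogeneous of degree $0$, since each of its entries $\pm\,\ell(E_i)/\ell(E_j)$ is (a sign times) a monomial and $\ell(E_j)\mid\ell(E_i)$ whenever $E_j$ is a face of $E_i$ (the vertices of $E_j$ are among those of $E_i$, so~\eqref{eq231022a} gives the divisibility). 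Consequently $F_C=\bigoplus_{\mathbf{a}\in\bbz^d}(F_C)_{\mathbf{a}}$ as a direct sum of complexes of finite-dimensional $\bbk$-vector spaces, and since homology commutes with direct sums, $H_k(F_C)=\bigoplus_{\mathbf{a}}H_k\big((F_C)_{\mathbf{a}}\big)$. Because $H_0(F_C)=S/\newIdeal{f_1,\dots,f_s}$ by Fact~\ref{fact231022b}(a), the complex $F_C$ is a resolution if and only if $H_k(F_C)=0$ for all $k\geq 1$, i.e.\ if and only if every strand $(F_C)_{\mathbf{a}}$ is exact in positive homological degrees.

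The heart of the argument is to identify each strand with a reduced cellular chain complex. Fix $\mathbf{a}\in\bbz^d$; if some coordinate of $\mathbf{a}$ is negative the strand vanishes and imposes no condition, so assume every coordinate is nonnegative and let $f=X^{\mathbf{a}}$ be the corresponding monomial. I would check that $(F_{C,k})_{\mathbf{a}}$ has $\bbk$-basis $\{\,(f/\ell(E_i))\,E_i : \dim E_i=k-1,\ \ell(E_i)\mid f\,\}$, whose index set is exactly the collection of $(k-1)$-dimensional cells of the subcomplex $C_{\leq f}$. Applying $\partial^{F_C}_k$ to such a basis element and cancelling gives $\sum_j\epsilon(E_i,E_j)\,(f/\ell(E_j))\,E_j$, where $E_j$ runs over the codimension-$1$ faces of $E_i$ (these satisfy $\ell(E_j)\mid\ell(E_i)\mid f$, so the terms lie in the strand). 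Under the identification $(f/\ell(E_j))\,E_j\leftrightarrow E_j$, this is precisely $\sum_j\epsilon(E_i,E_j)E_j$, the cellular boundary map of $C_{\leq f}$ with coefficients in $\bbk$ determined by the incidence function $\epsilon$; the degree-$1$ part recovers the augmentation $E_i\mapsto E_0$ coming from $\epsilon(E_i,E_0)=1$. Thus $(F_C)_{\mathbf{a}}$ is the augmented cellular chain complex of $C_{\leq f}$ over $\bbk$, with homological degree $k$ corresponding to cell-dimension $k-1$ (and degree $0$ to the empty cell $E_0$), so that $H_k\big((F_C)_{\mathbf{a}}\big)\cong\wti\HH_{k-1}(C_{\leq f})$.

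To finish I would translate the vanishing condition into the language of the statement. From the isomorphism just established, $H_k\big((F_C)_{\mathbf{a}}\big)=0$ for all $k\geq 1$ if and only if $\wti\HH_i(C_{\leq f})=0$ for all $i\geq 0$. When $C_{\leq f}=\emptyset$ (no vertex label divides $f$) the strand is concentrated in homological degree $0$, so its positive homology vanishes automatically; this is the ``empty'' alternative. When $C_{\leq f}\neq\emptyset$ the augmentation is surjective, so $\wti\HH_{-1}(C_{\leq f})=0$ for free, and hence ``$\wti\HH_i=0$ for all $i\geq 0$'' is equivalent to ``$\wti\HH_i=0$ for all $i$,'' i.e.\ to $C_{\leq f}$ being acyclic. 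Ranging over all monomials $f$ (equivalently, all $\mathbf{a}\in\bbz^d$ with nonnegative coordinates) and combining with the reduction of the first paragraph yields exactly the asserted equivalence.

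The main obstacle is the middle step: verifying cleanly that, after the rescaling $E_i\mapsto(f/\ell(E_i))E_i$, the multigraded differential of $F_C$ becomes the honest cellular boundary operator of $C_{\leq f}$. This requires (i) confirming that the face-divisibility $\ell(E_j)\mid f$ holds for every codimension-$1$ face of a cell surviving in the strand, so the differential genuinely restricts to $C_{\leq f}$; (ii) matching signs, which is immediate because the incidence function $\epsilon$ is by definition the data of a cellular boundary map (axioms (a)--(c)); and (iii) keeping the homological shift and the role of the empty cell $E_0$ straight, since it is precisely the presence of $E_0$ in degree $0$ that makes the relevant homology \emph{reduced} rather than unreduced. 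The remaining pieces---multigraded decomposition, commutation of homology with direct sums, and the degree-$(-1)$ reduced-homology bookkeeping---are routine.
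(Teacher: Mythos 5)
Your proposal is correct, but note that the paper offers no proof to compare against: the statement is quoted as a Fact with attribution to Bayer and Sturmfels~\cite{MR1647559}, and your argument is precisely the standard proof from that source --- decompose $F_C$ into its $\bbz^d$-graded strands, identify the strand of degree $\mathbf{a}$ (after the rescaling $E_i\mapsto (f/\ell(E_i))E_i$, where $f=X^{\mathbf{a}}$) with the augmented cellular chain complex of $C_{\leq f}$ over $\bbk$, and read off the equivalence strand by strand. The delicate points are all handled correctly: the divisibility $\ell(E_j)\mid\ell(E_i)$ for faces guarantees both multihomogeneity and that $C_{\leq f}$ is closed under faces, the restricted incidence function computes cellular homology of the subcomplex, and the empty cell $E_0$ (whose label is $1$, so it lies in every nonempty $C_{\leq f}$) sits in homological degree $0$ and accounts for the homology being \emph{reduced}, which is exactly what makes the ``empty or acyclic'' dichotomy come out right.
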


\subsection*{The Vertex-Weighted Case} \label{sec230728c}

We conclude this section by addressing the vertex-weighted case.
By this, we consider the situation where $G$ is equipped with a \emph{vertex-weighting} $\lambda\colon V\to\bbn$
where $V=\{X_1,\ldots,X_d\}$ is the vertex set of $G$.
Let ${}^\lambda G$ denote a vertex-weighted graph, i.e., a graph $G$ equipped with a vertex-weighting $\lambda$. 
The \emph{edge ideal} of ${}^\lambda G$ is the ideal generated by the vertex-weighted edges of $G$:
$$\edgeIdeal[{}^\lambda G]=\newIdeal{X_i^{\lambda(X_i)}X_j^{\lambda(X_j)}\mid\text{$X_iX_j$ is an edge of $G$}}.
$$
(Since $G$ is undirected, this construction is inherently different from the edge ideal of a weighted oriented graph introduced by H\`a, et al.~\cite{MR3955821}.)
We suggestively sketch $^\lambda K_{2,2}$ as follows:
\begin{align*}
\xymatrix@R=3mm@C=7mm{&X_1^a\ar@{-}[rr]\ar@{-}[rrdd]&&Y_1^b\ar@{-}[lldd]\\
\\
&X_2^c\ar@{-}[rr]&&Y_2^d}
\end{align*}
where $a=\lambda(X_1)$ and so on;
the edge ideal in this case is
$$\edgeIdeal[{}^\lambda K_{2,2}]=\newIdeal{X_1^aY_1^b,X_1^aY_2^d,X_2^cY_1^b,X_2^cY_2^d}.$$

Let $m,n\in\bbn$ be given, and let $\lambda$ be a vertex-weighting on $K_{m,n}$. 
Let $^\lambda V_{m,n}$ denote Vischer's cell complex $V_{m,n}$ with each vertex $(i,j)$ labeled with the monomial generator
$X_i^{\lambda(X_i)}Y_j^{\lambda(Y_j)}$.

\begin{prop}\label{prop230909a}
Let $m,n\in\bbn$ be given, and let $\lambda$ be any vertex-weighting on $K_{m,n}$. 
Then Visscher's construction  $^\lambda V_{m,n}$ yields a (cellular, minimal) resolution of $S/\edgeIdeal[{}^\lambda K_{m,n}]$.
\end{prop}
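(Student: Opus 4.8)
The plan is to apply the two Bayer--Sturmfels criteria recalled above to the labeled complex $C={}^\lambda V_{m,n}$: minimality from Fact~\ref{fact231022b}, and the property of being a resolution from the acyclicity criterion of Fact~\ref{fact230909a}. The whole argument hinges on first recording the clean multiplicative shape of the labels. Because the two sides of the graph use disjoint variables, for a face $(A,B)$ with $\emptyset\neq A\subseteq[m]$ and $\emptyset\neq B\subseteq[n]$ the definition~\eqref{eq231022a} gives
\[
\ell(A,B)=\operatorname{lcm}\bigl(X_a^{\lambda(X_a)}Y_b^{\lambda(Y_b)}\mid a\in A,\ b\in B\bigr)=\prod_{a\in A}X_a^{\lambda(X_a)}\prod_{b\in B}Y_b^{\lambda(Y_b)} .
\]
I would establish this formula first, since both remaining steps read off it immediately.

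For minimality, note that a codimension-$1$ face of $(A,B)$ is obtained by deleting one index, so the relevant quotient of labels is either $X_a^{\lambda(X_a)}$ or $Y_b^{\lambda(Y_b)}$; as $\lambda$ takes values in $\bbn$, this is always a non-unit. Hence $\ell$ strictly increases along faces and Fact~\ref{fact231022b}\eqref{fact231022b1} yields minimality. This is verbatim the argument of Fact~\ref{fact231022b}\eqref{fact231022b2} in the edge-weighted case.

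The substantive step is acyclicity. Given a monomial $f=\prod_i X_i^{c_i}\prod_j Y_j^{d_j}$, I would set $A_f=\{i\mid\lambda(X_i)\leq c_i\}$ and $B_f=\{j\mid\lambda(Y_j)\leq d_j\}$. The label formula shows $\ell(A,B)\mid f$ exactly when $A\subseteq A_f$ and $B\subseteq B_f$, so $C_{\leq f}$ is empty if $A_f$ or $B_f$ is empty and otherwise equals Visscher's complex $V_{A_f,B_f}$ on the partite sets indexed by $A_f,B_f$. The point to verify is that such a complex is acyclic: every face $(A,B)$ of $V_{A_f,B_f}$ is a face of the single top cell $(A_f,B_f)$, so $V_{A_f,B_f}$ is the closure of that cell and hence homeomorphic to a closed ball by regularity (Fact~\ref{fact230806a}); a ball is contractible, so $\wti\HH_i(V_{A_f,B_f})=0$ for all $i$. (Equivalently one may cite Fact~\ref{fact230806a} together with Fact~\ref{fact230909a} for the unweighted $K_{|A_f|,|B_f|}$, taking $f$ to be the product of all its variables.) Fact~\ref{fact230909a} then gives that $F_C$ is a resolution, and minimality was checked above. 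I do not anticipate a genuine obstacle: the vertex-weighting only rescales exponents in the labels and does not disturb the combinatorics of $C_{\leq f}$, which remains a full Visscher subcomplex and therefore a ball; this is precisely why the vertex-weighted case is uniform in $\lambda$, in contrast to the edge-weighted case.
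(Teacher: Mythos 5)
Your proof is correct and follows essentially the same route as the paper: both arguments verify minimality exactly as in Fact~\ref{fact231022b}\eqref{fact231022b2} and apply the Bayer--Sturmfels criterion after observing that, because the weights sit on vertices, every subcomplex $({}^\lambda V_{m,n})_{\leq f}$ coincides with a subcomplex of the unweighted Visscher complex determined by the index sets $A_f$ and $B_f$. Your identification of that subcomplex as the closed cell $\overline{(A_f,B_f)}$, hence a ball, is a slightly more explicit and self-contained justification of acyclicity than the paper's appeal to Visscher's theorem, but it is the same idea.
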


\begin{proof}
One checks readily that for each monomial $f$, the unlabeled subcomplex $(^\lambda V_{m,n})_{\leq f}$ has the form $(V_{m,n})_{\leq g}$ for some  $g$;
this uses the fact that we are weighting the vertices, specifically, because each generator divisible by $X_i$  is maximally divisible by $X_i^{\lambda(X_i)}$,
and similarly for $Y_j$.
Since each $(V_{m,n})_{\leq g}$ is either empty or acyclic by Visscher~\ref{fact230806a}, it follows that each $(^\lambda V_{m,n})_{\leq f}$ is either empty or acyclic,
so $^\lambda V_{m,n}$ yields a resolution by Bayer and Sturmfel's criterion~\ref{fact230909a}. 
The minimality of this resolution is verified like Fact~\ref{fact231022b}\eqref{fact231022b2}.
\end{proof}

\section{Proof of Theorem~\ref{thm230730a}} \label{sec230728d}

Recall that Fact~\ref{fact231022b}\eqref{fact231022b2} shows that $F^\omega_{m,n}$ is always minimal.

\begin{step} \label{step230913a}
First, consider the special case $m=1$. It is straightforward to show that $V_{1,n}$ is an $(n-1)$-dimensional simplex.
It follows that Visscher's construction $V_{1,n}^\omega$ yields the Taylor resolution which is always a resolution. 
This verifies the result when $m=1$, and the case $n=1$ is handled similarly. 
\end{step}

\begin{step} \label{step230913b}
Next, consider the special case $m=n=2$. 
Assume without loss of generality that $\alpha=\omega(X_1Y_1)$, that is, that $\omega(X_1Y_1)\leq\omega(X_iY_j)$ for all $i,j\in[2]$. 
In other words, $K_{2,2}^\omega$ has the following form where $\alpha\leq\beta,\gamma,\delta$.
$$\xymatrix@R=3mm@C=7mm{&X_1\ar@{-}[rr]^\alpha\ar@{-}[rrdd]_<<<\beta&&Y_1\ar@{-}[lldd]^<<<\gamma\\
 \\
&X_2\ar@{-}[rr]_\delta&&Y_2}$$
Because of Step~\ref{step230913a} above, we need to show that $V_{2,2}^\omega$ yields a resolution of $S/\edgeIdeal[K_{2,2}^\omega]$
if and only if there is a vertex $v$ such that 
$\omega(vw)=\alpha$ for all $w$ adjacent to $v$.

For the forward implication in this case, we argue by contrapositive.
So suppose there is no such vertex $v$. In particular, this implies that $\alpha<\beta,\gamma$ and $\alpha\leq\delta$.
We inspect $V_{2,2}^\omega$ 
\begin{center}

\begin{tikzpicture}
     \node (L) at (-2, 1) {$V_{2,2}^\omega =$};
    \fill[gray!30, rounded corners=10pt] (0,0) rectangle (3,2);    
     \node (A) at (0, 2) {$X_1^{\alpha}Y_1^{\alpha}$};
    \node (B) at (3, 2) {$X_1^{\beta}Y_2^{\beta}$};
    \node (C) at (0, 0) {$X_2^{\gamma}Y_1^{\gamma}$};
    \node (D) at (3, 0) {$X_2^{\delta}Y_2^{\delta}$};
   
    
    \draw (A) --  (B);
    \draw (A) --  (C);
   \draw (C) --  (D);
    \draw (B) -- (D);

\end{tikzpicture} 
\end{center}
and see that the conditions on the weights $\alpha,\ldots,\delta$ imply that the monomial
$f=X_1^\alpha Y_1^\alpha X_2^\delta Y_2^\delta$ makes $(V_{2,2}^\omega)_{\leq f}$ into two isolated vertices:
$$\xymatrix@R=3mm@C=7mm{&X_1^{\alpha}Y_1^{\alpha}\\
(V_{2,2}^\omega)_{\leq f}= \\
&&&X_2^{\delta}Y_2^{\delta}.} 
$$
This cell complex is disconnected, hence neither empty nor acyclic, so $V_{2,2}^\omega$ does not yield a resolution, as desired.

For the converse in this special case, we argue directly. 
Assume that there is a vertex $v$ such that 
$\omega(vw)=\alpha$ for all $w$ adjacent to $v$.
By symmetry, assume without loss of generality that $v=X_1$,
so $K_{2,2}^\omega$ has the following form where $\alpha\leq\gamma,\delta$.
$$\xymatrix@R=3mm@C=7mm{&X_1\ar@{-}[rr]^\alpha\ar@{-}[rrdd]_<<<\alpha&&Y_1\ar@{-}[lldd]^<<<\gamma\\
 \\
&X_2\ar@{-}[rr]_\delta&&Y_2}$$
Again, we inspect $V_{2,2}^\omega$ 
\begin{center}

\begin{tikzpicture}
     \node (L) at (-2, 1) {$V_{2,2}^\omega =$};
    \fill[gray!30, rounded corners=10pt] (0,0) rectangle (3,2);    
     \node (A) at (0, 2) {$X_1^{\alpha}Y_1^{\alpha}$};
    \node (B) at (3, 2) {$X_1^{\alpha}Y_2^{\alpha}$};
    \node (C) at (0, 0) {$X_2^{\gamma}Y_1^{\gamma}$};
    \node (D) at (3, 0) {$X_2^{\delta}Y_2^{\delta}$.};
   
    
    \draw (A) --  (B);
    \draw (A) --  (C);
   \draw (C) --  (D);
    \draw (B) -- (D);

\end{tikzpicture} 
\end{center}
To apply Bayer and Sturmfel's criterion~\ref{fact230909a}, we assume that $f=X_1^a X_2^b Y_1^c Y_2^d$ is such that
$(V_{2,2}^\omega)_{\leq f}$ is nonempty and prove that it is acyclic. 
Recall that $(V_{2,2}^\omega)_{\leq f}$ is the subcomplex of $V_{2,2}^\omega$ induced by the vertices whose labels divide $f$. 
Because of the small and simple shape of $V_{2,2}^\omega$, it is straightforward to show that the only nonempty, non-acyclic induced subcomplexes of $V_{2,2}^\omega$
are the following:
$$\xymatrix@R=3mm@C=3mm{&X_1^{\alpha}Y_1^{\alpha}\\
C= &&&&&&\text{or}\\
&&&X_2^{\delta}Y_2^{\delta}}
\qquad
 \xymatrix@R=3mm@C=3mm{&&&&X_1^{\alpha}Y_2^{\alpha}\\
&D= \\
&&X_2^{\gamma}Y_1^{\gamma}} 
$$
(For instance, the 1-skeleton is not an \textit{induced} subcomplex since the inclusion of the four edges necessitates the inclusion of the 2-cell they surround.)
We show that these are not of the form $(V_{2,2}^\omega)_{\leq f}$.

Suppose by way of contradiction that $C=(V_{2,2}^\omega)_{\leq f}$. Since $X_1^{\alpha}Y_1^{\alpha}\in C$, we have
$\alpha\leq a$ and $\alpha\leq c$. Similarly, we have $\alpha\leq\delta\leq b$ and $\alpha\leq\delta\leq d$. It follows that $X_1^\alpha Y_2^\alpha\in (V_{2,2}^\omega)_{\leq f}=C$,
contradicting the explicit form of $C$.

A similar argument shows that $D\neq (V_{2,2}^\omega)_{\leq f}$, as desired. This concludes the proof in Step~\ref{step230913b}.
\end{step}

\begin{step} \label{step230913c}
Now, we deal with forward implication in the general case. 
We argue by contrapositive.
By Step~\ref{step230913a}, we assume that $m,n\geq 2$. 

First, assume that condition~(\ref{thm230730a}.\ref{thm230730a2}.a) fails, so there is no vertex $v$ such that 
$\omega(vw)=\alpha$ for all $w$ adjacent to $v$.
That is, for every edge $vw$ with $\omega(vw)=\alpha$, there is a vertex $u$ adjacent to $v$ such that $\omega(vu)>\alpha$, and similarly for $w$.
By symmetry, assume without loss of generality that $\alpha=\omega(X_1Y_1)$, that is, that $\omega(X_1Y_1)\leq\omega(X_iY_j)$ for all $i,j$. 
It follows that we have $i$ and $j$ such that $\alpha>\omega(X_1Y_j)$ and $\alpha>\omega(X_iY_1)$.
Now argue as in the forward implication of Step~\ref{step230913b} to show that 
$f=X_1^\alpha Y_1^\alpha X_i^\delta Y_j^\delta$ with $\delta=\omega(X_iY_j)$
makes $(V_{m,n}^\omega)_{\leq f}$ into two isolated vertices:
$$\xymatrix@R=3mm@C=7mm{&X_1^{\alpha}Y_1^{\alpha}\\
(V_{m,n}^\omega)_{\leq f}= \\
&&&X_i^{\delta}Y_j^{\delta}.} 
$$
This implies that $V_{m,n}^\omega$ does not yield a resolution, as desired.

Next, assume that condition (\ref{thm230730a}.\ref{thm230730a2}.a) is satisfied, but that condition (\ref{thm230730a}.\ref{thm230730a2}.b) fails.
By symmetry, assume without loss of generality that the vertex $v$ guaranteed by condition (\ref{thm230730a}.\ref{thm230730a2}.a) is $v=X_m$. 
Set $\beta=\max\{\omega(X_iY_j)\mid i=1,\ldots,m; j=1,\ldots,n\}$ and $f=(X_1\cdots X_{m-1}Y_1\cdots Y_n)^\beta$. 
It is straightforward to show that the subcomplex $(V_{m,n}^\omega)_{\leq f}$ is precisely $V_{m-1,n}^\mu$ where $\mu$ is the restriction $\mu=\omega|_{K_{m-1,n}}$.
The failure of condition (\ref{thm230730a}.\ref{thm230730a2}.b) 
says that  Visscher's construction for $(V_{m,n}^\omega)_{\leq f}$ does not yield a resolution.
Bayer and Sturmfel's criterion~\ref{fact230909a} says that there is a monomial $g$ such that $[(V_{m,n}^\omega)_{\leq f}]_{\leq g}$ is nonempty and non-acyclic. 
It is straightforward to show that $[(V_{m,n}^\omega)_{\leq f}]_{\leq g}$ has the form $(V_{m,n}^\omega)_{\leq h}$ for some monomial $h$. 
Thus, $(V_{m,n}^\omega)_{\leq h}$ is nonempty and non-acyclic, so another application of~\ref{fact230909a} shows that $V_{m,n}^\omega$ does not yield a resolution.
This concludes the proof of the forward implication in the general case. 
\end{step}

\begin{step} \label{step230913d}
Now, we deal with the converse in the general case. 
We  argue directly, by induction on $m+n$. Steps~\ref{step230913a} and~\ref{step230913b} above cover the base case $m+n\leq 4$.
In the inductive step, we assume that $m,n\geq 2$ and that there is a vertex $v$ such that 
\begin{enumerate}[\rm(a)]
\item $\omega(vw)=\alpha$ for all $w$ adjacent to $v$, and
\item Visscher's construction for $K_{m,n}^\omega-v$ yields a resolution.
\end{enumerate}
By symmetry, we assume without loss of generality that $v=X_i$.
To make the following argument clean, we re-index our $X$-vertices as $X_0,\ldots,X_m$ and assume that $v=X_0$, so that
\begin{enumerate}[\rm(a)]
\item $\omega(X_0Y_j)=\alpha$ for all $j\in[n]$, and
\item Visscher's construction $V_{m,n}^\mu$ for $K_{m+1,n}^\omega-X_0$ yields a resolution.
\end{enumerate}
Here $\mu=\omega|_{K_{m,n}}$.
Note that this implies that $S=\bbk[X_0,\ldots,X_m,Y_1,\ldots,Y_n]$.

We apply a mapping cone argument to show that Visscher's construction for $K_{m+1,n}^\omega$ yields a resolution.
It is worth noting that this mapping cone description is not present in Visscher's work and may be of independent interest even in that square-free context. 

The mapping cone description of Visscher's resolution comes from the following observation.
For each face $(A,B)\in V_{m+1,n}^\omega$ with $\emptyset\neq A\subseteq [m]^+=[0,1,\ldots,m]$ and $\emptyset\neq B\subseteq[n]$, let $[A,B]$ denote the corresponding 
basis vector in the
chain complex $\ol F_{m+1,n}^\omega$, as in Construction~\ref{con230806a}. 
This gives the following three mutually exclusive cases for the basis vectors.
\begin{enumerate}[(1)]
\item \label{item230913a} $0\notin A$. Basis vectors in this case essentially give the chain complex $\ol F_{m,n}^\mu$, which is a resolution of $\edgeIdeal[K_{m,n}^\mu]$, by assumption.
More rigorously, basis vectors in this case give a subcomplex isomorphic to $\ol F_{m,n}^\mu$.
\item \label{item230913b} $A=\{0\}$. 
Basis vectors in this case essentially give a truncated and shifted Taylor resolution $\ol T$ for the ideal $\newIdeal{X_0^\alpha Y_1^\alpha,\ldots,X_0^\alpha Y_n^\alpha}$:
$$\xymatrix@R=5mm@C=15mm{
T=
&\ar[r]^{\partial^T_3} 
&S^{\binom{n}{2}}\ar[r]^{\partial^T_2} 
&S^{\binom{n}1}\ar[r]^{\partial^T_1} 
&S\ar[r] 
& 0
\\
\ol T=
&\ar[r]^{\partial^{\ol T}_3=-\partial^T_4} 
&S^{\binom{n}3}\ar[r]^{\partial^{\ol T}_2=-\partial^T_3} 
&S^{\binom{n}2}\ar[r]^{\partial^{\ol T}_1=-\partial^T_2} 
&S^{\binom{n}1}\ar[r] 
& 0
}$$
More rigorously, basis vectors $[0,B]\in\ol F_{m+1,n}^\omega$ give a subcomplex isomorphic to $\ol T$ where we denote the corresponding basis vector simply as $[B]\in\ol T$.
The resolution $\ol T$ is a truncated, twisted, and shifted Koszul complex on the sequence $Y_1^\alpha,\ldots,Y_n^\alpha$, and the ideal
$\newIdeal{X_0^\alpha Y_1^\alpha,\ldots,X_0^\alpha Y_n^\alpha}$ is none other than the edge ideal $\edgeIdeal[K_{1,n}^\nu]$ where $\nu=\omega|_{K_{1,n}}$, which is the
constant edge-weighting $\alpha$.
\item \label{item230913c} $A\supsetneq\{0\}$. 
Basis vectors in this case are obtained from basis vectors $[A',B]\in\ol F_{m,n}^\mu$ by the association $[A',B]\rightsquigarrow[A'\cup\{0\},B]$ where $A=A-\{0\}$.
Basis vectors in this case mix with those of the previous cases when the differential on $\ol F_{m+1,n}^\omega$ is applied. 
\end{enumerate}
Let's look at this in an example for use in the remainder of the section.

\begin{ex}\label{ex230913a}
Consider the case $m+1=2$ and $n=3$.
In this case, our assumptions show that $\edgeIdeal[K_{3,2}^\omega]$ is essentially the ideal $J$ from Example~\ref{ex230728b}:
$$\edgeIdeal[K_{3,2}^\omega]=\newIdeal{X_0^\alpha Y_1^\alpha ,X_0^\alpha Y_2^\alpha ,X_0^\alpha Y_3^\alpha ,X_1^\beta Y_1^\beta,X_1^\gamma Y_2^\gamma,X_1^\delta Y_3^\delta}$$
with 
\begin{align*}
\beta&=\omega(X_1Y_1)
&\gamma&=\omega(X_1Y_2)
&\delta&=\omega(X_1Y_3).
\end{align*}
Visscher's cell complex and the associated cellular chain complex are
\begin{center}

\begin{tikzpicture}
     \node (L) at (-1.5, 1) {$V_{2,3}^{\omega}= $ };
     
     \fill[gray!30, rounded corners=5pt] (0,0) rectangle (3,2);
     \fill[gray!50, rounded corners=5pt] (0,2) -- (1.5,1) -- (3,2) -- cycle;      
     \fill[gray!50, rounded corners=5pt] (0,0) -- (1.5,-1) -- (3,0) -- cycle;

    \node (A) at (0, 2) {$X_0^\alpha Y_1^\alpha$};
    \node (B) at (3, 2) {$X_0^\alpha Y_2^\alpha$};
    \node (C) at (0, 0) {$X_1^\beta Y_1^\beta$};
    \node (D) at (3, 0) {$X_1^\gamma Y_2^\gamma$};
    \node (E) at (1.5, 1) {$X_0^\alpha Y_3^\alpha$};
    \node (F) at (1.5, -1) {$X_1^\delta Y_3^\delta $};

    \draw (A) -- (B);
    \draw (A) -- (C);
    \draw (B) -- (D);
    \draw[dotted] (C) -- (D);
    
     \draw (A) -- (E);
    \draw (B) -- (E);
    \draw (C) -- (F);    
     \draw (E) -- (F);
    \draw (D) -- (F);
\end{tikzpicture}
\end{center}
$$\xymatrix@R=2mm{
\ol F_{2,3}^\omega=
&0\ar[r]
&S^1\ar[r]^{\partial^{F_{2,3}^\omega}_4}
&S^5\ar[r]^{\partial^{F_{2,3}^\omega}_3}
&S^9\ar[r]^{\partial^{F_{2,3}^\omega}_2}
&S^6\ar[r]
&0\\
&&[01,123]&
[1,123]\ar[r]\ar[rd]\ar[rdd]&
[1,23]&
[1,1]\\
&&&[0,123]\ar[rdd]\ar[rddd]\ar[rdddd]&
[1,13]&
[1,2]\\
&&&[01,23]&
[1,12]&
[1,3]\\
&&&[01,13]&
[0,23]&
[0,1]\\
&&&[01,12]&
[0,13]&
[0,2]\\
&&&&
[0,12]&
[0,3]\\
&&&&
[01,1]&
\\
&&&&
[01,2]&
\\
&&&&
[01,3]&
}
$$
Below each free module, we list the basis vectors with case~\eqref{item230913a} at the top, case~\eqref{item230913b} in the middle, and case~\eqref{item230913c} at the bottom.

Applying the differential to a basis vector of the form $[1,B]$ outputs a linear combination of basis vectors of the same form, as indicated by the arrows emanating from the basis
vector $[1,123]$.
This shows that these basis vectors form a subcomplex of $\ol F_{2,3}^\omega$, which
is part of the point of the statement ``basis vectors in this case give a subcomplex isomorphic to $\ol F_{m,n}^\mu$'' in item~\eqref{item230913a} above.
Here is what this subcomplex looks like in our example.
$$\xymatrix@R=2mm{
&&0\ar[r]
&S^1\ar[r]
&S^3\ar[r]
&S^3\ar[r]
&0\\
&&&[1,123]&
[1,23]&
[1,1]\\
&&&&
[1,13]&
[1,2]\\
&&&&
[1,12]&
[1,3]
}
$$
Note that is is supported on the following subcomplex of $V_{2,3}^\omega$.
\begin{center}
\begin{tikzpicture}
      \fill[gray!30, rounded corners=10pt] (0,1.5) -- (3,1.5) -- (1.5,0) -- cycle;
     
    \node (A) at (-1, .75) {$V_{1,3}^{\mu} = $};
    \node (B) at (0, 1.5) {$X_1^{\beta}Y_1^{\beta}$};
    \node (C) at (3, 1.5) {$X_1^{\gamma}Y_2^{\gamma}$};
    \node (D) at (1.5, 0) {$X_1^{\delta}Y_3^{\delta}$};
   
    
    \draw [->] (B) --  (C);
   \draw (C) --  (D);
    \draw (B) -- (D);
\end{tikzpicture} 
\end{center}
One establishes the remainder of the quoted statement by verifying that the differential on this subcomplex of $F_{m+1,n}^\omega$ is the same as the differential on $\ol F_{m,n}^\mu$.

Similarly, applying the differential to a basis vector of the form $[0,B]$ outputs a linear combination of basis vectors of the same form, as indicated by the arrows emanating from the basis
vector $[0,123]$.
This is part of the point of the statement ``basis vectors $[0,B]\in\ol F_{m+1,n}^\omega$ give a subcomplex isomorphic to $\ol T$'' in item~\eqref{item230913b} above.
Here is what $\ol T$ looks like in this situation.
$$\xymatrix@R=2mm{
\ol T=
&&0\ar[r]
&S^1\ar[r]^{\partial^{\ol T}_2}
&S^3\ar[r]^{\partial^{\ol T}_1}
&S^3\ar[r]
&0\\
&&&[123]&
[23]&
[1]\\
&&&&
[13]&
[2]\\
&&&&
[12]&
[3]
}
$$
Note that is is supported on the following simplex which is the top face of $V_{2,3}^\omega$.
\begin{center}
\begin{tikzpicture}
      \fill[gray!30, rounded corners=10pt] (0,1.5) -- (3,1.5) -- (1.5,0) -- cycle;
     
    \node (B) at (0, 1.5) {$X_0^{\beta}Y_1^{\beta}$};
    \node (C) at (3, 1.5) {$X_0^{\gamma}Y_2^{\gamma}$};
    \node (D) at (1.5, 0) {$X_0^{\delta}Y_3^{\delta}$};
   
    
    \draw [->] (B) --  (C);
   \draw (C) --  (D);
    \draw (B) -- (D);
\end{tikzpicture} 
\end{center}
One establishes 
the remainder of the quoted statement by checking by hand that the differentials on the two complexes are the same.  
For instance, we compute:
\begin{align*}
\partial^{\ol F_{2,3}^\omega}_2([0,123])
&=-\frac{X_0^\alpha Y_1^\alpha Y_2^\alpha Y_3^\alpha}{X_0^\alpha Y_2^\alpha Y_3^\alpha}[0,23]
+\frac{X_0^\alpha Y_1^\alpha Y_2^\alpha Y_3^\alpha}{X_0^\alpha Y_1^\alpha Y_3^\alpha}[0,13]
-\frac{X_0^\alpha Y_1^\alpha Y_2^\alpha Y_3^\alpha}{X_0^\alpha Y_1^\alpha Y_2^\alpha}[0,12]
\\
\partial^{\ol T}_2([123])
&=-\left(\frac{Y_1^\alpha Y_2^\alpha Y_3^\alpha}{Y_2^\alpha Y_3^\alpha}[23]
-\frac{Y_1^\alpha Y_2^\alpha Y_3^\alpha}{Y_1^\alpha Y_3^\alpha}[13]
+\frac{Y_1^\alpha Y_2^\alpha Y_3^\alpha}{Y_1^\alpha Y_2^\alpha}[12]\right)
\end{align*}
which definitely agree; the general computation is similar.
Notice,  it is key here that $\omega(X_0Y_i)=\alpha$ for all $i\in [n]$. 

On the other hand, applying the differential to basis vectors of the form $[01,B]$ outputs linear combinations of basis vectors of all three forms: $[1,B]$, $[0,B]$,  $[01,B']$.
\end{ex}

We return to our general proof of Step~\ref{step230913d}.
In what follows, recall that $\ol F_{m+1,n}^\omega$ lives over the ring $S=\bbk[X_0,\ldots,X_m,Y_1,\ldots,Y_n]$,
and $\ol F_{m,n}^\mu$ lives over the ring $S'=\bbk[X_1,\ldots,X_m,Y_1,\ldots,Y_n]$ with 1 fewer variable.

To describe $\ol F_{m+1,n}^\omega$ as a mapping cone requires a chain map, described on $S$-basis vectors as follows:
$$\xymatrix{
(\ol F_{m,n}^\mu\otimes_{S'}S)(-X_0^\alpha)
\ar[d]_-\phi
&[A,B]\ar@{|->}[d]\\
(\ol F_{m,n}^\mu\otimes_{S'}S)\oplus \ol T& X_0^\alpha\protect{\begin{pmatrix}[A,B]\\ f([A,B])\end{pmatrix}}
}$$
where $(\ol F_{m,n}^\mu\otimes_{S'}S)(-X_0^\alpha)$ is the multi-graded twist of $\ol F_{m,n}^\mu\otimes_{S'}S$ and
\begin{align*}
f([A,B])&=\begin{cases}
0&\text{if $|A|\geq 2$}\\
-\frac{\operatorname{mdeg}([a,B])}{\operatorname{mdeg}([B])}[B]&\text{if $A=\{a\}$}
\end{cases}
\end{align*}
where
\begin{align*}
\operatorname{mdeg}([a,B])
&=X_a^{M_a}\prod_{b\in B}Y_b^{\omega(X_aY_b)}
\\
\operatorname{mdeg}([B])
&=X_0^\alpha\prod_{b\in B}Y_b^\alpha
\end{align*}
where
\begin{align*}
M_a
&=\max(\omega(X_aY_j)\mid j\in B)\geq \alpha.
\end{align*}

Notice that these mdeg's are nothing more than the multidegrees of the multigraded basis vectors in $\ol F_{m,n}^\mu\otimes_{S'}S$ and $\ol T$, respectfully,
that is, the monomial labels on the associated faces of the cell/simplicial complexes supporting these resolutions.

When we take the mapping cone, the basis vectors in case~\eqref{item230913a} and~\eqref{item230913b} will come from the first and second summands in the codomain of $\phi$, 
respectively.
The basis vectors in case~\eqref{item230913c} will come from the domain of $\phi$.

Before continuing, let's look at this for our running example.

\begin{ex}\label{ex230913b}
Continue with the situation of Example~\ref{ex230913a},
and assume that $\beta\leq\gamma\leq\delta$. We write $\phi$ with the Visscher basis for the domain and the Visscher and Taylor bases for the codomain,
suppressing the twists on the free modules. 
$$\xymatrix@R=2mm{
&&[1,123]&
[1,23]&
[1,1]\\
&&&
[1,13]&
[1,2]\\
&&&
[1,12]&
[1,3]
\\
(\ol F_{1,3}^\mu\otimes_{S'}S)(-X_0^\alpha)=
\ar[ddd]_-\phi
&0\ar[r]
&S^1\ar[r]\ar[ddd]_{\phi_2}
&S^3\ar[r]\ar[ddd]_{\phi_1}
&S^3\ar[r]\ar[ddd]_{\phi_0}
&0
\\ \\ \\
(\ol F_{1,3}^\mu\otimes_{S'}S)\oplus \ol T=
&0\ar[r]
&S^{1+1}\ar[r]
&S^{3+3}\ar[r]
&S^{3+3}\ar[r]
&0
\\
&&[1,123]&
[1,23]&
[1,1]\\
&&[123]&
[1,13]&
[1,2]\\
&&&
[1,12]&
[1,3]
\\
&&&
[23]&
[1]\\
&&&
[13]&
[2]\\
&&&
[12]&
[3]
}
$$
Here's what $\phi$ does to a basis vector in the domain.
\begin{align*}
\phi_2([1,12])
&=X_0^\alpha\protect{\begin{pmatrix}[1,12]\\ -\frac{X_1^\gamma Y_1^\beta Y_2^\gamma }{X_0^\alpha Y_1^\alpha Y_2^\alpha}[12]\end{pmatrix}}
=\protect{\begin{pmatrix}X_0^\alpha[1,12]\\ -X_1^\gamma Y_1^{\beta-\alpha} Y_2^{\gamma-\alpha} [12]\end{pmatrix}}\\
\intertext{and we check that the chain map diagram commutes on this basis vector.}
\partial^{(\ol F_{1,3}^\mu\otimes_{S'}S)(-X_0^\alpha)}_2([1,12])
&=-\frac{X_1^\gamma Y_1^\beta Y_2^\gamma}{X_1^\gamma Y_2^\gamma}[1,2]
+\frac{X_1^\gamma Y_1^\beta Y_2^\gamma}{X_1^\beta Y_1^\beta}[1,1] \\
&=-Y_1^\beta [1,2]
+X_1^{\gamma-\beta} Y_2^\gamma[1,1]
\end{align*}
$$\xymatrix@C=19mm{
[1,12]\ar@{|->}[r]^-{\partial_2}\ar@{|->}[d]_-{\phi_2}
& -Y_1^\beta [1,2]
+X_1^{\gamma-\beta} Y_2^\gamma[1,1] \ar@{|->}[d]^-{\phi_1}
\\
\protect{\begin{pmatrix}X_0^\alpha[1,12]\\ -X_1^\gamma Y_1^{\beta-\alpha} Y_2^{\gamma-\alpha} [12]\end{pmatrix}}\ar@{|->}[rd]_-{\left(\begin{smallmatrix}\partial_2&0 \\ 0 & \partial_2\end{smallmatrix}\right)}
&\protect{\begin{pmatrix}X_0^\alpha(-Y_1^\beta[1,2]+X_1^{\gamma-\beta}Y_2^\gamma[1,1])\\ -(-Y_1^\beta\frac{X_1^\gamma Y_2^\gamma}{Y_2^\alpha}[2]+X_1^{\gamma-\beta}Y_2^\gamma\frac{X_1^\beta Y1^\beta}{Y_1^\alpha}[1]) \end{pmatrix}}
\ar@{=}[d]\\
& \protect{\begin{pmatrix} X_0^\alpha(-Y_1^\beta[1,2]+X_1^{\gamma-\beta}Y_2^\gamma[1,1]) \\ -X_1^\gamma Y_1^{\beta-\alpha}Y_2^{\gamma-\alpha}(-Y_1^\alpha[2]+Y_2^\alpha[1])\end{pmatrix}}
}$$
\end{ex}

We return to our general proof of Step~\ref{step230913d}.
As in the preceding example, one checks readily that $\phi$ is a multigraded $S$-linear chain map between multigraded $S$-complexes.

We next show that the mapping cone $\cone(\phi)$ is isomorphic to $F_{m+1,n}^\omega$.
Recall that, given a chain map $\psi\colon W\to U$, we have
$$\cone(\psi)_i=U_i\oplus W_{i-1}\xra[=\left(\begin{smallmatrix}\partial^U_i & \psi_{i-1}\\0&-\partial^W_{i-1}\end{smallmatrix}\right)]{\partial^{\cone(\psi)}_i} U_{i-1}\oplus W_{i-2}=\cone(\psi)_{i-1}.
$$
In our situation, we have $W=(\ol F_{m,n}^\mu\otimes_{S'}S)(-X_0^\alpha)$ and 
$U=(\ol F_{m,n}^\mu\otimes_{S'}S)\oplus \ol T$ so our cone looks like the following.
$$
\xymatrix{
\cone(\phi)_i=
(\ol F_{m,n}^\mu\otimes_{S'}S)_i
\oplus 
\ol T_i
\oplus
(\ol F_{m,n}^\mu\otimes_{S'}S)(-X_0^\alpha)_{i-1}
\ar[dd]^{\left(\begin{smallmatrix}\left(\begin{smallmatrix}\partial^{\ol F_{m,n}^\mu\otimes_{S'}S}_i & 0 \\ 0 & \partial^{\ol T}_i\end{smallmatrix}\right) & \phi_{i-1}\\0&-\partial^{(\ol F_{m,n}^\mu\otimes_{S'}S)(-X_0^\alpha)}_{i-1}\end{smallmatrix}\right)}_{\partial^{\cone(\phi)}_i=} 
\\ \\
\cone(\phi)_{i-1}=(\ol F_{m,n}^\mu\otimes_{S'}S)_{i-1}
\oplus 
\ol T_{i-1}
\oplus 
(\ol F_{m,n}^\mu\otimes_{S'}S)(-X_0^\alpha)_{i-2}}
$$
We have essentially already defined the isomorphism between $\cone(\phi)$ and $F_{m+1,n}^\omega$. 
We summarize here on basis vectors.
$$
\xymatrix@R=1mm{
\cone(\phi)\ar[r]^-\Phi
&\ol F_{m+1,n}^\omega
\\
\protect{\begin{pmatrix} [A,B] \\ 0 \\ 0 \end{pmatrix}}\ar@{|->}[r]
& [A,B]
\\
\protect{\begin{pmatrix} 0 \\ [B] \\ 0 \end{pmatrix}}\ar@{|->}[r]
& [\{0\},B]
\\
\protect{\begin{pmatrix}  0 \\ 0 \\ [A,B] \end{pmatrix}}\ar@{|->}[r]
& [\{0\}\cup A,B]
}
\qquad
\xymatrix@R=1mm{
\ol F_{m+1,n}^\omega\ar[r]^-\Psi
&\cone(\phi)
\\
[A,B]\ar@{|->}[r]
&\protect{\begin{cases}
\protect{\begin{pmatrix} [A,B] \\ 0 \\ 0 \end{pmatrix}} & \text{if $0\notin A$} \\
\protect{\begin{pmatrix} 0 \\ [B] \\ 0 \end{pmatrix}} & \text{if $A=\{0\}$} \\
\protect{\begin{pmatrix}  0 \\ 0 \\ [A-\{0\},B] \end{pmatrix}} & \text{if $A\supsetneq\{0\}$.} 
\end{cases}}
}$$
Before concluding, let's look at this for our running example.

\begin{ex}\label{ex230914a}
Continue with the situation of Example~\ref{ex230913b},
and assume that $\beta\leq\gamma\leq\delta$. 
Given the description of $\phi$ from Example~\ref{ex230913b}, we see that $\cone(\phi)$ has the following shape
where the basis vectors from the codomain of $\phi$ are listed at the top, and those from the domain are listed at the bottom.
$$\xymatrix@R=1mm{
\cone(\phi)=\!\!\!\!\!
&0\ar[r]
& S^{0+0+1}\ar[r]
&S^{1+1+3}\ar[r]
&S^{3+3+3}\ar[r]
&S^{3+3+0}\ar[r]
&0
\\
&&&[1,123]&
[1,23]&
[1,1]\\
&&&&
[1,13]&
[1,2]\\
&&&&
[1,12]&
[1,3]
\\
&&&[123]&
[23]&
[1]\\
&&&&
[13]&
[2]\\
&&&&
[12]&
[3]
\\
&&[1,123]&
[1,23]&
[1,1]\\
&&&
[1,13]&
[1,2]\\
&&&
[1,12]&
[1,3]
}
$$
Notice that the basis vectors from $\ol T$ are in rows 4--6 of basis vectors.
Compare this with our computation of $\ol F_{2,3}^\omega$ from Example~\ref{ex230913a}:
$$\xymatrix@R=1mm{
\ol F_{2,3}^\omega=
&0\ar[r]
&S^1\ar[r]^{\partial^{F_{2,3}^\omega}_4}
&S^5\ar[r]^{\partial^{F_{2,3}^\omega}_3}
&S^9\ar[r]^{\partial^{F_{2,3}^\omega}_2}
&S^6\ar[r]
&0\\
&&&
[1,123]&
[1,23]&
[1,1]\\
&&&&
[1,13]&
[1,2]\\
&&&&
[1,12]&
[1,3]\\
&&&[0,123]&
[0,23]&
[0,1]\\
&&&&
[0,13]&
[0,2]\\
&&&&
[0,12]&
[0,3]\\
&&[01,123]&[01,23]&
[01,1]&
\\
&&&[01,13]&
[01,2]&
\\
&&&[01,12]&
[01,3].&
}
$$

$$\xymatrix@R=1mm{
\ol F_{2,3}^\omega=
&0\ar[r]
&S^1\ar[r]^{\partial^{F_{2,3}^\omega}_4}
&S^5\ar[r]^{\partial^{F_{2,3}^\omega}_3}
&S^9\ar[r]^{\partial^{F_{2,3}^\omega}_2}
&S^6\ar[r]
&0\\
&&&
[1,123]&
[1,23]&
[1,1]\\
&&&&
[1,13]&
[1,2]\\
&&&&
[1,12]&
[1,3]\\
&&&[0,123]&
[0,23]&
[0,1]\\
&&&&
[0,13]&
[0,2]\\
&&&&
[0,12]&
[0,3]\\
&&[01,123]&[01,23]&
[01,1]&
\\
&&&[01,13]&
[01,2]&
\\
&&&[01,12]&
[01,3].&
}
$$
The arrangement of the basis vectors makes it straightforward to see that $\Phi$ and $\Psi$ describe inverse bijections between the bases
of $\cone(\phi)$ and $\ol F_{2,3}^\omega$, hence they describe isomorphisms between the free modules. One checks readily that they are also chain maps,
hence isomorphisms; for instance, we verify this for the basis vector $\left(\begin{smallmatrix}0\\ 0\\ [1,12]\end{smallmatrix}\right)\in\cone(\phi)$,
where $[1,12]\in (\ol F_{1,3}^\mu\otimes_{S'}S)(-X_0^\alpha)$:
$$\xymatrix@C=19mm{
\protect{\begin{pmatrix}0\\ 0\\ [1,12]\end{pmatrix}}\ar@{|->}[r]^-{\partial^{\cone(\phi)}}\ar@{|->}[d]_-{\Phi}
& \protect{\begin{pmatrix}X_0^\alpha[1,12]\\ -X_1^\gamma Y_1^{\beta-\alpha}Y_2^{\gamma-\alpha}[12]\\ Y_1^\beta[1,2]-X_1^{\gamma-\beta}Y_2^\gamma[1,1]\end{pmatrix}}\ar@{|->}[d]^-{\Phi}
\\
[01,12]\ar@{|->}[rd]_-{\partial^{\ol F_{2,3}^\omega}}
&\protect{\begin{matrix}X_0^\alpha[1,12]- X_1^\gamma Y_1^{\beta-\alpha} Y_2^{\gamma-\alpha} [0,12]\\
+Y_1^\beta[01,2]-X_1^{\gamma-\beta}Y_2^\gamma[01,1]\end{matrix}}\ar@{=}[d]\\
& \protect{\begin{matrix}\frac{X_0^\alpha X_1^\gamma Y_1^\beta Y_2^\gamma}{X_1^\gamma Y_1^\beta Y_2^\gamma}[1,12]
- \frac{X_0^\alpha X_1^\gamma Y_1^{\beta} Y_2^{\gamma}}{X_0^\alpha Y_1^{\alpha} Y_2^{\alpha}} [0,12]\\
+\frac{X_0^\alpha X_1^\gamma Y_1^{\beta} Y_2^{\gamma}}{X_0^\alpha X_1^{\gamma} Y_2^{\gamma}}[01,2]
-\frac{X_0^\alpha X_1^\gamma Y_1^{\beta} Y_2^{\gamma}}{X_0^\alpha X_1^{\beta} Y_1^{\beta}}[01,1]\end{matrix}}
}$$
\end{ex}

We now conclude our general proof of Step~\ref{step230913d}.
Arguing as in Example~\ref{ex230914a}, one checks readily that the maps $\Phi$ and $\Psi$ are inverse isomorphisms between $\cone(\phi)$ and $\ol F_{m+1,n}^\omega$.
Thus, to prove that $\ol F_{m+1,n}^\omega$ is a resolution, it remains only to show that $\cone(\phi)$ is acyclic, i.e., $\HH_i(\cone(\phi))=0$ for all $i\geq 1$.
To this end, we analyze the long exact sequence associated to $\cone(\phi)$.

Recall that, given a chain map $\psi\colon W\to U$, we have
a short exact sequence of chain complexes
$$0\to U\to\cone(\psi)\to\shift W\to 0$$
where $\shift W$ is a shifted copy of $W$. 
The associated long exact sequence in homology has the form
\begin{equation}
\cdots\to\HH_i(W)\xra{\HH_i(\psi)}\HH_i(U)\to\HH_i(\cone(\psi))\to\HH_{i-1}(W)\to\cdots.
\label{eq230915a}
\end{equation}
In our specific situation, we have chain map
$$\phi\colon (\ol F_{m,n}^\mu\otimes_{S'}S)(-X_0^\alpha)
\to(\ol F_{m,n}^\mu\otimes_{S'}S)\oplus \ol T$$
and short exact sequence
$$
0\to(\ol F_{m,n}^\mu\otimes_{S'}S)\oplus \ol T
\to\cone(\phi)\to \shift((\ol F_{m,n}^\mu\otimes_{S'}S)(-X_0^\alpha))\to 0.$$
By our induction hypothesis, the complexes $\ol F_{m,n}^\mu\otimes_{S'}S$ and $(\ol F_{m,n}^\mu\otimes_{S'}S)(-X_0^\alpha)$ are acyclic,
as is $\ol T$ since it is a Taylor resolution. 
Thus, for $i\geq 2$, the long exact sequence~\eqref{eq230915a} implies that $\HH_i(\cone(\phi))=0$,
and part of the remainder of the long exact sequence has the following form.
$$0\to \HH_1(\cone(\phi))\to\HH_0((\ol F_{m,n}^\mu\otimes_{S'}S)(-X_0^\alpha))\xra{\HH_0(\phi)}\HH_0(\ol F_{m,n}^\mu\otimes_{S'}S)\oplus \HH_0(\ol T)$$
It is straightforward to check that this has the following form
$$0\to \HH_1(\cone(\phi))\to \edgeIdeal[K_{m,n}^\mu]\xra{\left(\begin{smallmatrix}X_0^\alpha\\ \HH_0(X_0^\alpha f)\end{smallmatrix}\right)}\edgeIdeal[K_{m,n}^\mu]\oplus \edgeIdeal[K_{1,n}^\nu]$$
The top entry in the matrix here shows that the matrix describes an injective map, so $\HH_1(\cone(\phi))=0$.
This concludes the proof of Step~\ref{step230913d} and hence the proof of Theorem~\ref{thm230730a}.
\qed
\end{step}

\begin{cor}\label{cor240505a}
If condition~\ref{thm230730a1} or~\ref{thm230730a2} of Theorem~\ref{thm230730a} is satisfied, then the $(k+1)$st Betti number of 
$S/\edgeIdeal[K_{m,n}^\omega]$ is
$$\beta^S_{k+1}(S/\edgeIdeal[K_{m,n}^\omega])
=\sum_{j=1}^{k+1}\binom nj\binom m{k-j+2}.$$
\end{cor}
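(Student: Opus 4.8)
The plan is to read the Betti numbers directly off the ranks of the free modules in Visscher's complex $F_{m,n}^\omega$ and then to count the faces of $V_{m,n}$ by dimension. First I would invoke Theorem~\ref{thm230730a}: under either condition~\ref{thm230730a1} or~\ref{thm230730a2}, that theorem guarantees that $F_{m,n}^\omega$ is a \emph{minimal} free resolution of $S/\edgeIdeal[K_{m,n}^\omega]$. Because the resolution is minimal, there is no cancellation, so the $(k+1)$st Betti number is exactly the rank of the free module $F_{m,n,k+1}^\omega$.

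Next I would translate this rank into a face count. By Construction~\ref{con230806a}, the basis vectors of $F_{m,n,d}^\omega$ are indexed by the faces $(A,B)$ of $V_{m,n}$ with $\emptyset\neq A\subseteq[m]$, $\emptyset\neq B\subseteq[n]$, and $|A|+|B|=d+1$; equivalently, $F_{m,n,d}^\omega$ records the $(d-1)$-dimensional faces. Setting $d=k+1$, the rank of $F_{m,n,k+1}^\omega$ equals the number of such pairs $(A,B)$ with $|A|+|B|=k+2$.

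Finally I would carry out the count by conditioning on $j=|B|$. For a fixed value $j$ the constraint forces $|A|=k-j+2$, and since both $A$ and $B$ must be nonempty we need $1\leq j\leq k+1$; for each such $j$ the number of choices is $\binom{m}{k-j+2}\binom{n}{j}$. Summing over the admissible range $1\leq j\leq k+1$ produces the stated formula. There is no real obstacle here: once minimality is in hand the statement reduces to elementary bookkeeping, and the only points requiring care are the homological-degree-versus-dimension shift (a face of dimension $k$ sits in homological degree $k+1$) and the correct boundary values of the summation index.
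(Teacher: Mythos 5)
Your proposal is correct and takes essentially the same route as the paper: the paper's proof simply combines Theorem~\ref{thm230730a} (which gives minimality, so Betti numbers equal the ranks of the free modules) with the face count of $V_{m,n}$ already recorded in Visscher's~\cite[Corollary~6]{MR2262383}, whereas you carry out that elementary count of pairs $(A,B)$ with $|A|+|B|=k+2$ explicitly. Your bookkeeping of the dimension shift and the summation bounds is accurate, so the argument is complete.
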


\begin{proof}
This follows directly from Theorem~\ref{thm230730a} and Visscher's~\cite[Corollary~6]{MR2262383}.
\end{proof}

\section{Concluding Remarks}\label{sec230915a}

We are interested to know:

\begin{question}\label{q230915a}
If conditions~\ref{thm230730a1}--\ref{thm230730a2} of Theorem~\ref{thm230730a} fail, what does the minimal free resolution of $\edgeIdeal[K_{m,n}^\omega]$ look like?
Is it cellular? Is it supported on a complex containing $V_{m,n}$?
Must we always have 
$$\beta^S_{k+1}(S/\edgeIdeal[K_{m,n}^\omega])
\geq\sum_{j=1}^{k+1}\binom nj\binom m{k-j+2}?$$
\end{question}

Data we have collected suggest that the resolution is cellular, supported on a subdivision of $V_{m,n}$,
like in the following sketch where the single 3-cell of $V_{2,3}$ is divided in two, into a solid tetrahedron and a square-based pyramid.

\begin{tikzpicture}
     \node (L) at (-1.5, 1) {$V_{2,3}= $ };
     
     \fill[gray!30, rounded corners=2pt] (0,0) rectangle (3,2);
     \fill[gray!50, rounded corners=2pt] (0,2) -- (1.5,1) -- (3,2) -- cycle;      
     \fill[gray!50, rounded corners=2pt] (0,0) -- (1.5,-1) -- (3,0) -- cycle;

    \node[fill, circle, scale=.4] (A) at (0, 2) {};
    \node[fill, circle, scale=.4] (B) at (3, 2) {};
    \node[fill, circle, scale=.4] (C) at (0, 0) {};
    \node[fill, circle, scale=.4] (D) at (3, 0) {};
    \node[fill, circle, scale=.4] (E) at (1.5, 1) {};
    \node[fill, circle, scale=.4] (F) at (1.5, -1) {};

    \draw (A) -- (B);
    \draw (A) -- (C);
    \draw (B) -- (D);
    \draw[dotted] (C) -- (D);
    
     \draw (A) -- (E);
    \draw (B) -- (E);
    \draw (C) -- (F);    
     \draw (E) -- (F);
    \draw (D) -- (F);
\end{tikzpicture}  \begin{tikzpicture}
     \node (L) at (-2., 1) {\qquad$\rightsquigarrow $ \qquad $V_{2,3}'= $};
     
     \fill[gray!30, rounded corners=2pt] (0,0) rectangle (3,2);
     \fill[gray!50, rounded corners=2pt] (0,2) -- (1.5,1) -- (3,2) -- cycle;

    \node[fill, circle, scale=.4] (A) at (0, 2) {};
    \node[fill, circle, scale=.4] (B) at (3, 2) {};
    \node[fill, circle, scale=.4] (C) at (0, 0) {};
    \node[fill, circle, scale=.4] (D) at (3, 0) {};
    \node[fill, circle, scale=.4] (E) at (1.5, 1) {};
    \node[fill, circle, scale=.4] (F) at (1.5, -1) {};
\fill[gray!50, rounded corners=2pt] (1.5,1) -- (1.5,-1) -- (0,0) -- cycle;
\fill[gray!50, rounded corners=2pt] (1.5,1) -- (1.5,-1) -- (3,0) -- cycle;
\fill[gray!60, rounded corners=2pt] (0,0) -- (1.5,-1) -- (3,0) -- cycle;

    \draw (A) -- (B);
    \draw (A) -- (C);
    \draw (B) -- (D);
    \draw[dotted] (C) -- (D);
    
     \draw (A) -- (E);
    \draw (B) -- (E);
    \draw (C) -- (F);    
     \draw (E) -- (F);
    \draw (D) -- (F);

\draw (E) -- (C);
\draw (E) -- (D);

\end{tikzpicture}\\
However, we are nowhere near understanding this. We are also interested to know:

\begin{question}\label{q230915b}
If conditions~\eqref{thm230730a1} and~\eqref{thm230730a2} of Theorem~\ref{thm230730a} are satisfied, does $F_{m,n}^\omega$ have the structure of a
differential graded algebra (DGA)?
\end{question}

Given the brief nature of this discussion, we direct the reader to~\cite{avramov:ifr,beck:sgidgca} for background on DGAs.

Geller~\cite[Corollary 4.6.9]{MR4336793} answers Question~\ref{q230915b} affirmatively in the square-free (i.e., the unweighted) case and, moreover, in the vertex-weighted case,
e.g., the case where $\omega$ is constant. 
It is also true if $\dim(V_{m,n})\leq 3$ because Buchsbaum and Eisenbud~\cite{buchsbaum:asffr} show that it is always true for resolutions of length at most 3. 
Beyond that, we only have guesses, even for $F_{2,4}^\omega$.

\section*{Acknowledgments}
We are grateful to the referee for helpful suggestions.

\providecommand{\bysame}{\leavevmode\hbox to3em{\hrulefill}\thinspace}
\providecommand{\MR}{\relax\ifhmode\unskip\space\fi MR }
\providecommand{\MRhref}[2]{%
  \href{http://www.ams.org/mathscinet-getitem?mr=#1}{#2}
}
\providecommand{\href}[2]{#2}

\end{document}